\documentclass{amsart}

\usepackage{hyperref}
\usepackage{amsfonts,amsmath,amsthm,amssymb}
\usepackage{graphicx,tikz,float}
\usepackage{enumerate}
\usepackage{todonotes}
\usepackage{booktabs,array}
\usepackage{microtype,fullpage}
\usepackage[utf8]{inputenc}
\usepackage{pdfpages}
\usepackage{float}
\usepackage{caption}
\usepackage{subcaption}
\usepackage{amsmath,calc}
\usepackage{comment}
\usepackage[pagewise]{lineno}%\linenumbers %THIS IS FOR Discrete & Continuous Dynamical Systems - B and the only difference with the other Lyapunov29

%\usepackage[backend=bibtex,style=numeric,maxalphanames=1]{biblatex}
%\usepackage[backend=bibtex,style=alphabetic,maxalphanames=1]{biblatex}
%\addbibresource{lyapunov.bib}  

%\overfullrule=2cm

%\theoremstyle{plain}
\newtheoremstyle{dotless}{}{}{\itshape}{}{\bfseries}{}{ }{}
\theoremstyle{dotless}
\newtheorem{thm}{Theorem}[section]
\newtheorem{lem}{Lemma}[section]
\newtheorem{prop}{Proposition}[section]
\newtheorem{defn}{Definition}[section]

\newcommand{\Var}{\mathrm{Var}}

\theoremstyle{definition}

\theoremstyle{remark}
\newtheorem{remark}{Remark}[section]
\newtheorem*{acknowledgement}{Acknowledgement}

% distribution names
\newcommand{\Cauchy}{\text{Cauchy}}
\newcommand{\Bernoulli}{\text{Bernoulli}}

\begin{document}

\title{Lyapunov exponent and variance in the CLT
for products of random matrices related to random Fibonacci sequences}
%\author{Rajeshwari Majumdar, Phanuel Mariano, Hugo Panzo, Lowen Peng, Anthony Sisti}

	\author[Majumdar]{Rajeshwari Majumdar{$^{\dag}$}}
	\thanks{\footnotemark {$\dag$} Research was supported in part by NSF Grant DMS-1262929}
	\address{\parbox{\linewidth}{Department of Politics\\
		New York University\\
    New York, NY 10012, U.S.A}}
	\email{majumdar@nyu.edu}
	
	\author[Mariano]{Phanuel Mariano{$^{\ddag}$}}
	\thanks{\footnotemark {$\ddag$} Research was supported in part by NSF Grant DMS-1405169, 1712427.}
	\address{\parbox{\linewidth}{Department of Mathematics and Physics\\
		University of New Haven\\
		West Haven, CT 06516,  U.S.A.}}
	\email{pmariano@newhaven.edu}

\author[Panzo]{Hugo Panzo{$^{\star}$}}
\thanks{\footnotemark {$\star$} Research was supported in part by the UConn Mathematics Department and a Zuckerman fellowship.}
\address{\parbox{\linewidth}{Faculties of Electrical Engineering and Mathematics\\
Technion -- Israel Institute of Technology\\
Haifa 32000, Israel}} 
\email{panzo@campus.technion.ac.il}
	
	\author[Peng]{Lowen Peng{$^{\dag}$}}
%	\thanks{\footnotemark {$\dag$} Research was supported in part by NSF Grant DMS-1262929.}
	\address{\parbox{\linewidth}{Department of Mathematics\\
		University of Connecticut\\
		Storrs, CT 06269,  U.S.A.}}
	\email{lowen.peng@uconn.edu}

	\author[Sisti]{Anthony Sisti{$^{\dag}$}}
%	\thanks{\footnotemark {$\dag$} Research was supported in part by NSF Grant DMS-1262929.}
	\address{\parbox{\linewidth}{ Department of Biostatistics\\
		Brown University\\
		Providence, RI 02912,  U.S.A.}}
	\email{anthony\_sisti@brown.edu}

\keywords{products of random matrices, Lyapunov exponents, continued fractions, Fibonacci sequences}
\subjclass{Primary  37H15; Secondary  60B20 ,60B15, 11B39}

\date{\today}

\begin{abstract}

We consider three matrix models of order 2 with one random entry $\epsilon$ and the other three entries being deterministic. In the first model, we let 
$\epsilon\sim\textrm{Bernoulli}\left(\frac{1}{2}\right)$. For this model we develop a new technique to obtain
estimates for the top Lyapunov exponent in terms of a multi-level recursion involving Fibonacci-like sequences. This in turn gives a new characterization for the Lyapunov exponent in terms of these sequences. In the second model, we give similar estimates when $\epsilon\sim\textrm{Bernoulli}\left(p\right)$ and $p\in [0,1]$ is a parameter. Both of these models are related to random Fibonacci sequences. In the last model, we compute the Lyapunov exponent exactly when the random entry is replaced with $\xi\epsilon$ where $\epsilon$ is a standard Cauchy random variable and $\xi$ is a real parameter. We then use Monte Carlo simulations to approximate the variance in the CLT for both parameter models.
\end{abstract}

\maketitle

\tableofcontents

\section{Introduction}

The main purpose of our paper is to develop new methods to obtain precise estimates of Lyapunov exponents and the variance for the CLT related to the products of random matrices. Let $\{Y_i\}_{i\ge1}$ be a sequence of i.i.d. random matrices distributed according to a probability measure $\mu$. Further, let $S_n=Y_n Y_{n-1}\cdots Y_2 Y_1$. Assuming that $\mathbb E\left[\log^+\|Y_1\|\right]<\infty$, the \emph{top Lyapunov exponent} $\lambda$ associated with $\mu$ is given by
\begin{equation}\label{lyapdefn}
\lambda:=\lim_{n\to\infty}\frac1n\mathbb E\big[\log\|S_n\|\big]
\end{equation}	
	with 
	$\lambda\in\mathbb R\cup\{-\infty\}$. The top Lyapunov exponent gives the rate of exponential growth of the matrix norm of $S_n$ as $n\to\infty$. Since all finite-dimensional norms are equivalent, $\lambda$ is independent of the choice of norm $\|\cdot\|$. Although $\lambda$ depends on $\mu$, we usually omit this dependence from our notation. While one can also define a spectrum of Lyapunov exponents, in this paper we will only be concerned with the top Lyapunov exponent $\lambda$ and we refer to it as simply the Lyapunov exponent. Occasionally, when we are considering $\lambda$ over a family of distributions parametrized by some variable, we will write $\lambda$ as a function of that variable.  
	
	Furstenberg and Kesten (1960) and Le Page (1982) found analogues of
the Law of Large Numbers and Central Limit Theorem, respectively, for the norm
of these partial products. Despite these results having been established for some time, in most cases it is still impossible to compute the Lyapunov exponent explicitly from the distribution of the matrices. Moreover, computing the variance in the CLT has received scant attention in the literature. We point out that because of the difficulty in computing Lyapunov exponents,  most authors need to develop new techniques for specific matrix models rather than work in a general framework.

In this paper, we investigate the behavior of the Lyapunov exponent as the common distribution of the sequence of random matrices varies with a parameter. 
While there are works in the literature where explicit expressions have been obtained for some matrix models under certain conditions \cite{bougerol,chassaingetal,newman,limarahibe,mtwInvariant,Newman1986}, besides a few special examples, it is not possible to find a general explicit formula for the Lyapunov exponent. %As already mentioned, the Lyapunov exponent is known explicitly for very few random matrix models as there is no closed-form formula that can be applied universally. 
There is, however, an extensive literature on approximating the Lyapunov exponent for models where it cannot be calculated explicitly (see \cite{Protasov-Jungers-2013,pollicott}). For instance, in \cite{pollicott}, $\lambda$ is expressed in terms of associated complex functions and a more general algorithm to numerically approximate $\lambda$ is given. The method is efficient and converges very fast. The method also applies to a large class of matrix models.  There is also a significant interest in computing Lyapunov exponents in physics, with some recent work found in \cite{Akemann-Burda-Kieburg2014,Akemann-Kieburg-Wei-2013,Forrester-2013,Forrester-2015,Kargin-2014,Kieburg-Kosters-2019}. The analytic properties of the Lyapunov exponent as a function of the transition probabilities are studied in \cite{Peres-1991,Peres-1992,Ruelle-1979}. Lyapunov exponents are also useful in mathematical biology in the study of population dynamics. 

A random Fibonacci sequence $g_0,g_1,g_2\dots$ is defined by $g_0=g_1=1$ along with the recursive relation ${g_{n+1} = g_n \pm g_{n-1}}$ (linear case) or ${g_{n+1} = |g_n \pm g_{n-1}|}$ (non-linear case) for all $n\in\mathbb{N}$, where the sign $\pm$ is chosen by tossing a fair or biased coin (positive sign has probability $p$). In \cite{Viswanath2000}, Viswanath studied the exponential growth of $|g_n|$ as $n\to\infty$ in the linear case with $p=\frac{1}{2}$ by connecting it to a product of random matrices and then employing a new computational method to calculate the Lyapunov exponent to any degree of accuracy. The method involves using Stern-Brocot sequences, Furstenberg's Theorem (see Theorem \ref{furstenberg}) and the invariant measure to compute $\lambda$. We also point to the work of \cite{Janvresse-Rittaud-Rue2010,Janvresse-Rittaud-Rue2009,Janvresse-Rittaud-Rue2008} where the authors generalized the results of Viswanath by letting $0<p\leq 1$ and treating $\lambda$ as a function of $p$ which bears some similarity to the model we study in Section \ref{bernp}. They also considered the non-linear case. 

The model that is most relevant to our results is given in \cite{Goswami2004}, where the authors give an explicit formula for the cumulative distribution function of a random variable $X_p$ on $(0,\infty)$ characterized by the distributional identity
$$ X_p \sim \frac{1}{X_p}+ \epsilon_p,$$ where $\epsilon_p$ is a $\Bernoulli\left(p\right)$ random variable independent of $X_p$. Let CDF denote the cumulative distribution function for a random variable. The CDF of $X_p$ is given in terms of a continued fraction expansion. We will later see that the distribution of $X_p$ is the invariant distribution for the product of random matrices studied in Section \ref{bernp}.

We summarize the main results of the paper as follows. Consider the random matrices $$
Y_i = \left(\begin{array}{cc}
\epsilon_i & 1\\
1 & 0
\end{array}\right),
$$
where $\epsilon_i$ are i.i.d. random variables.
\begin{enumerate}

\item {\bf Lyapunov exponent when $\epsilon\sim \Bernoulli\left(p\right)$} (See Theorem \ref{UBprop}): The Lyapunov exponent $\lambda(p)$  can be estimated by 
\[
\frac{p\log 3}{4-p}\leq\lambda(p)\leq\frac{p\log 3}{2}.
\]

\item {\bf Lyapunov exponent when $\epsilon\sim \Bernoulli\left(\frac{1}{2}\right)$}(See Theorem \ref{thm:bounds}): The Lyapunov exponent $\lambda$  can be estimated by 
$$
p_n\leq\lambda\leq q_n,
$$
where 
$$
p_n =\frac{\log c_{n}}{\left(n+7\right)2^{n}} \,\,\, \mbox{and}\,\,\, q_n = \frac{\log c_{n}}{\left(n+4\right)2^{n}},
$$
and $c_n$ is given by Definitions \ref{def:recursion} and \ref{def:c_n}. Moreover, 
\[
\lim_{n\to \infty}p_{n} = \lim_{n\to\infty} q_{n} = \lambda.
\]
The method we develop differs from that of the papers listed above and requires the study of an interesting multi-level recursion satisfied by $c_n$.

\item {\bf Exact Lyapunov exponent involving Cauchy random variable} (See Proposition \ref{CauchyProp}): When

$$Y_i=
\left(\begin{array}{cc}
\xi \epsilon & -1\\
1 & 0
\end{array}\right), \ \epsilon\sim \Cauchy \left(0,1\right), \ \xi\in\mathbb{\mathbb{R}}, \ \xi\neq 0,
$$
then the Lyapunov exponent $\lambda(\xi)$ is given by
\[
\lambda(\xi)=\log\left(\frac{|\xi|+\sqrt{\xi^{2}+4}}{2}\right).
\]

\item {\bf Variance Simulation} (See Figures \ref{fig:BernPVars}, \ref{fig:Cauchy20Vars} and \ref{fig:CauchyZoomVars})
\end{enumerate}

The paper is organized as follows. In Section \ref{preliminaries} we give the preliminaries needed for the paper. In Section \ref{bernp}, we provide exact upper and lower bounds on the Lyapunov exponent associated with the product of random matrices where one entry is $\Bernoulli\left(p\right)$ with $0<p<1$. In particular, in Section \ref{bernhalf} we study the $p=\frac{1}{2}$ case and provide a sequence of progressively better bounds. We prove that these bounds converge to the Lyapunov exponent which gives a new characterization for the Lyapunov exponent. Not surprisingly, these bounds are related to Fibonacci sequences as in the work of \cite{Goswami2004,Janvresse-Rittaud-Rue2010,Janvresse-Rittaud-Rue2009,Janvresse-Rittaud-Rue2008,Viswanath2000}.

In Section \ref{xicauchy}, we give an example of a well-known model where we can calculate the Lyapunov exponent explicitly. In this model, one entry in the random matrix has the Cauchy distribution. In Section \ref{variance}, we examine the less studied variance associated with a multiplicative Central Limit Theorem for products of random matrices. The multiplicative CLT holds under some reasonable assumptions, see \cite{bougerol}. It states that for $\mathbf{x}\in\mathbb{R}^d\setminus\{\mathbf{0}\}$,
\[
\frac{1}{\sqrt{n}}\left(\log\left\Vert S_{n}\mathbf{x}\right\Vert -n\lambda\right)~\mbox{ and }~\frac{1}{\sqrt{n}}\left(\log\left\Vert S_{n}\right\Vert -n\lambda\right)
\]
converge weakly to a Gaussian random variable with mean $0$ and variance $\sigma^2>0$ as $n\to\infty$. In the special case where the distribution of $\|Y_1\mathbf{x}\|/\|\mathbf{x}\|$ doesn't depend on $\mathbf{x}\in\mathbb{R}^d\setminus\{\mathbf{0}\}$, Cohen and Newman \cite{newman} gave the explicit formulas
\begin{equation}\label{eq:newman}
\lambda=\mathbb{E}\left[\log\left(\frac{\|Y_1\mathbf{x}\|}{\|\mathbf{x}\|}\right)\right]~\text{ and }~\sigma^2=\mathbb{E}\left[\left(\log\left(\frac{\|Y_1\mathbf{x}\|}{\|\mathbf{x}\|}\right)-\lambda\right)^2\right]
\end{equation}
that hold whenever the expectations are finite. As far as the authors know, this is the only case where an explicit formula for the variance is given. Compared to the calculation of the Lyapunov exponent, there have been relatively few attempts to explicitly compute or numerically approximate the variance. We address this deficiency in the context of the parameter models that we consider by first describing an easy to implement Monte Carlo simulation scheme and then using it to approximate the variance for some of the models we considered earlier in the paper.   %In Section \ref{preliminaries}, we discuss some well-known results regarding $\lambda$ that we make use of in our subsequent analysis of the aforementioned random matrix models. 

%For each of these models, we also examine the variance associated with a multiplicative Central Limit Theorem for products of random matrices under different priors on the model parameters. 

%in Section \ref{xicauchy}, this element is a standard Cauchy random variable multiplied by a real-valued parameter $\xi$; in Section \ref{xibernhalf}, a Bernoulli random variable with probability $p$ equal to $\frac{1}{2}$ multiplied by a real-valued parameter $\xi$; in \ref{bernp}, a Bernoulli random variable with the probability $p$ as the parameter; and in Section \ref{xirad}, a Rademacher random variable multiplied by a real-valued parameter $\xi$. 

\section{Preliminaries}\label{preliminaries}

In what follows, we introduce notational conventions and terminology and recall well-known results regarding the Lyapunov exponent. Let $\mathbb{P}^{1}\left(\mathbb{R}\right)$ denote the one-dimensional projective space. Recall that we can regard $\mathbb{P}^{1}\left(\mathbb{R}\right)$ as the space of all one dimensional subspaces of $\mathbb{R}^{2}$. To describe $\mathbb{P}^{1}\left(\mathbb{R}\right)$, let us first define the following equivalence relation $\sim$ on $\mathbb{R}^{2}\backslash\left\{ \mathbf{0}\right\} $.
We say that the vectors $\mathbf{x},\mathbf{x'}\in\mathbb{R}^{2}\backslash\left\{ \mathbf{0}\right\} $ are equivalent, denoted by $\mathbf{x}\sim\mathbf{x'}$, if there exists a nonzero real number $c$ such that $\mathbf{x}=c\mathbf{x'}$.
We define $\bar{\mathbf{x}}$ to be the equivalence class of a vector
$\mathbf{x}\in\mathbb{R}^{2}\backslash\left\{ \mathbf{0}\right\}$. Now we can define $\mathbb{P}^{1}\left(\mathbb{R}\right)$
as the set of all such equivalence classes $\bar{\mathbf{x}}$. We can also define a bijective map $\phi:\mathbb{P}^{1}\left(\mathbb{R}\right)\to\mathbb{R}\cup\left\{ \infty\right\} $
 by 
\[
\phi\left(\bar{\mathbf{x}}\right)=\begin{cases}
\frac{x_{1}}{x_{2}} & \mbox{if }x_{2}\neq0\\
\infty & \mbox{if }x_{2}=0
\end{cases}
\]
%Note that the map $\phi$ is a homeomorphism. 
where $\mathbf{x}=\left(\begin{array}{c}
x_{1}\\
x_{2}
\end{array}\right)\in\mathbb{R}^{2}\backslash\left\{ \mathbf{0}\right\} $ is in the equivalence class $\bar{\mathbf{x}}$. Hence with a slight abuse of notation we can identify $\mathbb{P}^1(\mathbb{R})$ with $\mathbb{R}\cup\left\{ \infty\right\}$.

Consider the following group action of $\mathrm{GL}(2,\mathbb R)$ on $\mathbb{P}^{1}\left(\mathbb{R}\right)$. For $A=\begin{pmatrix}a&b\\c&d\end{pmatrix}\in\mathrm{GL}(2,\mathbb R)$ and $x\in\mathbb{P}^{1}\left(\mathbb{R}\right)$, we define
\[A\cdot x=\frac{ax+b}{cx+d}.\]
Let $\mu$ and $\nu$ be probability measures on $\mathrm{GL}(2,\mathbb R)$ and $\mathbb{P}^{1}\left(\mathbb{R}\right)$, respectively. We say  that $\nu$ is \emph{$\mu$-invariant} if it satisfies
\begin{equation}\label{eq:invariance}
\int_{\mathbb{P}^{1}\left(\mathbb{R}\right)}f(x)\,\mathrm d\nu(x)=\int_{\mathbb{P}^{1}\left(\mathbb{R}\right)}\int_{\mathrm{GL}(2,\mathbb R)}f(A\cdot x)\,\mathrm d\mu(A)\,\mathrm d\nu(x)
\end{equation}
for all bounded measurable functions $f:\mathbb{P}^{1}\left(\mathbb{R}\right)\to \mathbb{R}$. Furthermore, we say that a set $G \subset \mathrm{GL}(2,\mathbb R)$ is \emph{strongly irreducible} if there is no finite family $V_1,\ldots,V_k$ of proper $1$-dimensional vector subspaces of $\mathbb{R}^2$  such that $A(V_1\cup\cdots\cup V_k)=V_1\cup\cdots\cup V_k$ for all $A\in G$. 
%We also use the notation $\bar x=\begin{pmatrix}x\\1\end{pmatrix}$. 

For a real valued function $f$, define $f^+=\max \left\lbrace f,0 \right\rbrace$. The following result by Furstenberg and Kesten in \cite{furstenbergkesten} gives an important analogue to the Law of Large Numbers. 

\begin{thm}[Furstenberg-Kesten]\label{furstenbergkesten}\ \\
	Let $\{Y_i\}_{i\ge1}$ be a sequence of i.i.d. $\mathrm{GL}(d,\mathbb R)$-valued random matrices and $S_n= Y_n Y_{n-1} \cdots Y_2 Y_1$. If \ $\mathbb E\left[\log^+\|Y_1\|\right]<\infty$ and $\lambda$ is the Lyapunov exponent defined in \eqref{lyapdefn}, then almost surely we have
	\[\lambda=\lim_{n\to\infty}\frac1n\log\|S_n\|.\]
\end{thm}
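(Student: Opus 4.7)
The plan is to apply Kingman's subadditive ergodic theorem to the sequence $a_n := \log\|S_n\|$. The key observation is that a matrix operator norm is submultiplicative: writing $S_{n+m} = (Y_{n+m}Y_{n+m-1}\cdots Y_{n+1})S_n$, we have
\[
\|S_{n+m}\| \leq \|Y_{n+m}\cdots Y_{n+1}\|\cdot\|S_n\|.
\]
Taking logarithms yields the subadditive relation $a_{n+m}\leq a_n + b_{n,m}$, where $b_{n,m}:=\log\|Y_{n+m}\cdots Y_{n+1}\|$. Since $\{Y_i\}$ is i.i.d., $b_{n,m}$ has the same distribution as $a_m$, and $b_{n,m}$ is independent of $\sigma(Y_1,\dots,Y_n)$. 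Equivalently, the array $X_{n,m}:=\log\|Y_m\cdots Y_{n+1}\|$ for $n<m$ satisfies $X_{n,p}\leq X_{n,m}+X_{m,p}$, is stationary under the shift, and inherits ergodicity from the Bernoulli shift on $\prod\mathrm{GL}(d,\mathbb{R})$.

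Next I would verify the integrability hypothesis. Iterating submultiplicativity gives $\|S_n\|\leq\prod_{i=1}^n\|Y_i\|$, hence $\log^+\|S_n\|\leq\sum_{i=1}^n\log^+\|Y_i\|$, and the assumption $\mathbb{E}[\log^+\|Y_1\|]<\infty$ yields $\mathbb{E}[a_n^+]\leq n\,\mathbb{E}[\log^+\|Y_1\|]<\infty$ for every $n$. In particular $\mathbb{E}[X_{0,1}^+]<\infty$, which is the hypothesis required by Kingman's theorem. Kingman's theorem then asserts that the almost sure limit
\[
L := \lim_{n\to\infty}\frac{1}{n}\log\|S_n\|
\]
exists in $\mathbb{R}\cup\{-\infty\}$, is deterministic by ergodicity, and equals $\inf_{n\geq 1}\frac{\mathbb{E}[a_n]}{n}$.

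To identify $L$ with $\lambda$, note that the expectations $\mathbb{E}[a_n]$ themselves form a subadditive sequence: taking expectations in $a_{n+m}\leq a_n+b_{n,m}$ gives $\mathbb{E}[a_{n+m}]\leq\mathbb{E}[a_n]+\mathbb{E}[a_m]$. Fekete's lemma then yields $\lim_{n\to\infty}\frac{\mathbb{E}[a_n]}{n}=\inf_{n\geq 1}\frac{\mathbb{E}[a_n]}{n}=L$, and the left-hand side is exactly $\lambda$ as defined in \eqref{lyapdefn}. The main subtlety to handle carefully is that $a_n$ is not bounded below (so a priori $L=-\infty$ is possible), but this is consistent with the convention $\lambda\in\mathbb{R}\cup\{-\infty\}$ adopted in the introduction, and Kingman's theorem applies without modification in this case. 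If one wished to avoid invoking Kingman, a direct proof along Furstenberg–Kesten's original lines partitions the product $S_{kn}$ into $k$ blocks of length $n$, bounds $\log\|S_{kn}\|$ above by the sum of block log-norms, applies the classical strong law to get $\limsup_k\frac{1}{kn}\log\|S_{kn}\|\leq\frac{\mathbb{E}[a_n]}{n}$ almost surely for each $n$, and then takes $n\to\infty$ along the subadditive sequence $\mathbb{E}[a_n]/n\downarrow\lambda$; a matching lower bound, which is the genuinely delicate step, would require more care when $\lambda=-\infty$ but is immediate from Kingman.
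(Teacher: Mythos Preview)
Your argument via Kingman's subadditive ergodic theorem is correct and is the standard modern proof of this result. However, the paper does not actually prove Theorem~\ref{furstenbergkesten}: it is stated as background, attributed to Furstenberg and Kesten, and cited without proof. So there is no ``paper's own proof'' to compare against.

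For context, the original 1960 Furstenberg--Kesten argument predates Kingman's theorem (1968) and proceeds more along the direct lines you sketch at the end: block decomposition plus the ordinary strong law for the upper bound, with a more delicate combinatorial argument for the matching lower bound. Your Kingman-based route is cleaner and is how the result is usually presented today; it also handles the $\lambda=-\infty$ case transparently, as you note. Either approach is appropriate here, and since the paper only cites the theorem, your write-up would stand on its own as a self-contained justification.
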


For the rest of this paper, we will suppose that $\mu$ is a probability measure on the group $\mathrm{GL}(2,\mathbb R)$ and that the matrices $\{Y_i\}_{i\ge1}$ are distributed according to $\mu$. However, Theorems \ref{Furstenberg-Kifer}, \ref{furstenberg} and \ref{lepageclt} all have statements valid for matrices in $\mathrm{GL}(d,\mathbb R)$ as well. In \cite{Furstenberg-Kifer1983}, Furstenberg and Kifer give an expression for $\lambda$ in terms of $\mu$ and the $\mu$-invariant probability measures $\nu$ on $\mathbb{P}^{1}\left(\mathbb{R}\right)$. The following result is given in \cite[Theorem 2.2]{Furstenberg-Kifer1983}.

\begin{thm}[Furstenberg-Kifer]\label{Furstenberg-Kifer}\ \\ Let $\mu$ be a probability measure on the group $\mathrm{GL}(2,\mathbb R)$ and $\{Y_i\}_{i\ge1}$ be a sequence of i.i.d. random matrices distributed according to $\mu$. If $\mathbb E \big[ \log^+\|Y_1\|+\log^+\|Y_1^{-1}\| \big]<\infty$, then the Lyapunov exponent is given by 

\[\lambda= \sup_{\nu} \int_{\mathbb{P}^{1}\left(\mathbb{R}\right)}\int_{\mathrm{GL}(2,\mathbb R)}\log\frac{\|A\mathbf{x}\|}{\|\mathbf{x}\|}\,\mathrm d\mu(A)\,\mathrm d\nu(\bar{\mathbf{x}}),\]
where the supremum is taken over all probability measures $\nu $ on $\mathbb{P}^{1}\left(\mathbb{R}\right)$ that are $\mu$-invariant.

\end{thm}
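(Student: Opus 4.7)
The plan is to prove the two inequalities $\sup \le \lambda$ and $\sup \ge \lambda$ separately. Before starting, I would record that the inner integral
\[
f(\bar{\mathbf{x}}) := \int_{\mathrm{GL}(2,\mathbb R)} \log \frac{\|A\mathbf{x}\|}{\|\mathbf{x}\|} \, \mathrm d\mu(A)
\]
defines a bounded, continuous function on $\mathbb{P}^1(\mathbb R)$: well-definedness follows from scale-invariance of the integrand; boundedness from the pointwise bound $|\log(\|A\mathbf{x}\|/\|\mathbf{x}\|)| \le \log^+\|A\| + \log^+\|A^{-1}\|$ together with the integrability hypothesis; and continuity from dominated convergence applied to unit-norm representatives. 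Compactness of $\mathbb{P}^1(\mathbb R) \cong S^1$ will later supply the weak compactness needed to extract limits.

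For the easy direction $\sup \le \lambda$, I would fix a $\mu$-invariant $\nu$ and iterate the multiplicative cocycle identity
\[
\log \frac{\|S_n \mathbf{x}\|}{\|\mathbf{x}\|} = \sum_{k=1}^{n} \log \frac{\|Y_k S_{k-1} \mathbf{x}\|}{\|S_{k-1} \mathbf{x}\|},
\]
take expectation in $Y_1,\dots,Y_n$, and then integrate against $\nu$. Iterating the invariance relation \eqref{eq:invariance} shows that for $\bar{\mathbf{x}}\sim\nu$ the law of $\overline{S_{k-1}\mathbf{x}}$ remains $\nu$, so each summand contributes exactly $\int f\, \mathrm d\nu$, and the left side is dominated by $\mathbb E \log\|S_n\|$. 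Dividing by $n$ and invoking Theorem~\ref{furstenbergkesten} (together with a uniform-integrability upgrade to convergence in $L^1$) yields $\int f\, \mathrm d\nu \le \lambda$.

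For the harder direction $\sup \ge \lambda$, I would build, via a Krylov--Bogolyubov construction, a $\mu$-invariant $\nu^*$ attaining $\lambda$. Fixing $\mathbf{x}_0 \ne \mathbf{0}$, I form the Ces\`aro averages
\[
\nu_n := \frac{1}{n} \sum_{k=0}^{n-1} \mathrm{Law}\bigl(\overline{S_k \mathbf{x}_0}\bigr),
\]
extract a weak subsequential limit $\nu^*$ using compactness of $\mathbb{P}^1(\mathbb R)$, and use the Feller continuity of the kernel $\bar{\mathbf{x}} \mapsto \mathrm{Law}(\overline{Y_1 \mathbf{x}})$ to see that $\nu^*$ is $\mu$-invariant. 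Telescoping the cocycle identity gives the clean formula $\int f\, \mathrm d\nu_n = n^{-1}\mathbb E\log(\|S_n\mathbf{x}_0\|/\|\mathbf{x}_0\|)$; boundedness and continuity of $f$ then pass to the limit to produce $\int f\, \mathrm d\nu^* = \lim_k n_k^{-1}\mathbb E\log(\|S_{n_k}\mathbf{x}_0\|/\|\mathbf{x}_0\|)$.

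The main obstacle is forcing this last limit to equal $\lambda$: for an unfortunate deterministic $\mathbf{x}_0$ the growth rate can drop to a strictly smaller Lyapunov exponent. I would handle this by randomizing, replacing $\mathbf{x}_0$ with an independent random direction $\mathbf{X}_0$ having a fully supported law on $\mathbb{P}^1(\mathbb R)$ and mixing to get $\nu_n^{\mathrm{mix}} := \mathbb E\bigl[\nu_n^{\mathbf{X}_0}\bigr]$. Oseledec's multiplicative ergodic theorem, available under the two-sided $\log^+$ hypothesis, ensures that $\mathbf{X}_0$ almost surely avoids the at most one-dimensional slow subspace, so $n^{-1}\log(\|S_n \mathbf{X}_0\|/\|\mathbf{X}_0\|) \to \lambda$ almost surely, and a uniform-integrability argument upgrades this to $L^1$ convergence. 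Running the Krylov--Bogolyubov argument on $\nu_n^{\mathrm{mix}}$ then yields a $\mu$-invariant $\nu^*$ with $\int f\, \mathrm d\nu^* = \lambda$, completing the proof.
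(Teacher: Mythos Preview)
The paper does not prove this theorem at all: it is quoted as a preliminary result from the literature, with the attribution ``The following result is given in \cite[Theorem 2.2]{Furstenberg-Kifer1983}.'' There is therefore no proof in the paper to compare your proposal against.

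That said, your outline is essentially the standard argument and is basically sound. Two small remarks. First, in the easy direction you do not need Theorem~\ref{furstenbergkesten} or any uniform-integrability upgrade: once you have $n\int f\,\mathrm d\nu \le \mathbb E[\log\|S_n\|]$ for every $n$, subadditivity of $n\mapsto \mathbb E[\log\|S_n\|]$ (Fekete) already gives $\int f\,\mathrm d\nu \le \lambda$ directly from the definition~\eqref{lyapdefn}. Second, in the randomization step the relevant property of the law of $\mathbf{X}_0$ is that it be \emph{atomless} on $\mathbb P^1(\mathbb R)$, not merely fully supported: the Oseledec slow subspace is, for almost every realization, a single (random) direction, and an atomless $\mathbf{X}_0$ independent of the $Y_i$'s avoids it almost surely by Fubini. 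With that tweak the Krylov--Bogolyubov limit $\nu^*$ indeed satisfies $\int f\,\mathrm d\nu^*=\lambda$.
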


If $\nu$ is the unique $\mu$-invariant probability measure on $\mathbb{P}^{1}\left(\mathbb{R}\right)$, then Theorem \ref{Furstenberg-Kifer} implies that the Lyapunov exponent can be written as \[\lambda=  \int_{\mathbb{P}^{1}\left(\mathbb{R}\right)}\int_{\mathrm{GL}(2,\mathbb R)}\log\frac{\|A \mathbf x \|}{\|\mathbf x \|}\,\mathrm d\mu(A)\,\mathrm d\nu(\bar{\mathbf x}).\]
Sufficient conditions for the existence of such a unique $\nu$ were given by Furstenberg and can be found in \cite[Theorem II.4.1]{bougerol}.

\begin{thm}[Furstenberg]\label{furstenberg}\ \\
	Let $\mu$ be a probability measure on the group $\mathrm{GL}(2,\mathbb R)$ and $\{Y_i\}_{i\ge1}$ be a sequence of i.i.d. random matrices distributed according to $\mu$. Additionally, let $G_\mu$ be the smallest closed subgroup containing the support of $\mu$. Suppose the following hold:
	\begin{enumerate}[(i)]
		\item $\mathbb E \big[\log^+\|Y_1\| \big]<\infty$,	
		\item For $M$ in $G_\mu$, $|\det M|=1$,
		\item $G_\mu$ is not compact, 
		\item $G_\mu$ is strongly irreducible.
	\end{enumerate}
Then there exists a unique $\mu$-invariant probability measure $\nu$ on $\mathbb{P}^{1}\left(\mathbb{R}\right)$ and $\lambda>0$. Moreover, $\nu$ is atomless. Consequently,
%and $\lambda>0$ a.s. In particular, a.s.
%	\[\lambda=\lim_{n\to\infty}\frac1n\log\|S_n x\|=\lim_{n\to\infty}\frac1n\log\|S_n\|\]
	%for any $ \bar{x}\in\mathbb{P}^{1}\left(\mathbb{R}\right)$, and
	\[\lambda=\int_{-\infty}^\infty\int_{\mathrm{GL}(2,\mathbb R)}\log\frac{\|A\mathbf x\|}{\|\mathbf x \|}\,\mathrm d\mu(A)\,\mathrm d\nu(\bar{\mathbf x}).\]
\end{thm}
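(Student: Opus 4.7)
The plan is to establish the four conclusions in order: existence of a $\mu$-invariant probability measure $\nu$ on $\mathbb{P}^{1}(\mathbb{R})$, atomlessness of $\nu$, uniqueness of $\nu$, and positivity of $\lambda$. Once uniqueness is in hand, the integral formula is immediate from Theorem \ref{Furstenberg-Kifer} since the supremum over a singleton is the value itself.

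For existence, I would use a Krylov--Bogolyubov fixed point argument. Since $\mathbb{P}^{1}(\mathbb{R})$ is compact (homeomorphic to $S^{1}$), the convex set of Borel probability measures on it is weak-$*$ compact. The operator $T^{*}$ on measures defined via \eqref{eq:invariance}, namely $\int f\,d(T^{*}\nu):=\int\int f(A\cdot x)\,d\mu(A)\,d\nu(\bar{\mathbf{x}})$, is continuous and affine, so Cesaro-averaging the iterates of any starting measure and passing to a weak-$*$ limit produces a fixed point of $T^{*}$, which is a $\mu$-invariant measure.

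For atomlessness, let $\alpha:=\sup_{x}\nu(\{x\})$ and $F:=\{x:\nu(\{x\})=\alpha\}$. If $\alpha>0$, then $F$ is finite and the invariance relation $\nu(\{x\})=\int \nu(\{A^{-1}\cdot x\})\,d\mu(A)$ forces $A^{-1}\cdot F\subset F$ for every $A$ in the support of $\mu$, and hence for all $A\in G_{\mu}$ by closedness. Since $|A^{-1}\cdot F|=|F|$, this upgrades to $A\cdot F=F$, so the corresponding finite union of one-dimensional subspaces of $\mathbb{R}^{2}$ is $G_{\mu}$-invariant, contradicting strong irreducibility (iv). For uniqueness, I would use a contraction principle for the projective random walk $\bar{\mathbf{x}}_{n}=S_{n}\cdot\bar{\mathbf{x}}$: non-compactness together with strong irreducibility imply that two independent copies of the walk asymptotically couple regardless of initial conditions, so any two $\mu$-invariant measures must agree when tested against continuous functions.

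The hard part is positivity of $\lambda$, and this is where I expect the main obstacle. The strategy is proof by contradiction: assuming $\lambda=0$, combine Theorem \ref{Furstenberg-Kifer} applied to the unique $\nu$ with $|\det A|=1$ from hypothesis (ii) to conclude that the cocycle $\sigma(A,\bar{\mathbf{x}}):=\log\|A\mathbf{x}\|/\|\mathbf{x}\|$ has zero mean against $\mu\otimes\nu$. Using the cocycle identity together with a martingale/$L^{2}$-argument, one shows that $\sigma$ must actually be a coboundary in $\nu$-measure, which after unwinding exhibits a nontrivial $G_{\mu}$-invariant Euclidean structure on $\mathbb{R}^{2}$. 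This forces $G_{\mu}$ into a compact subgroup of $\mathrm{GL}(2,\mathbb{R})$, contradicting (iii). This coboundary-to-compactness step is classical but genuinely delicate, and it is precisely here that all four hypotheses combine.
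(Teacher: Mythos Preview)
The paper does not prove this statement. Theorem~\ref{furstenberg} appears in the Preliminaries section as a classical result quoted from the literature, with the attribution ``can be found in \cite[Theorem II.4.1]{bougerol}'' and no accompanying proof. There is therefore nothing in the paper to compare your proposal against.

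That said, your sketch is a reasonable outline of the standard proof one finds in Bougerol--Lacroix or Furstenberg's original work: Krylov--Bogolyubov for existence, the maximal-atom argument for atomlessness, a contraction/proximality argument for uniqueness, and the coboundary-to-compactness reduction for $\lambda>0$. The weakest link in your write-up is the uniqueness paragraph: ``two independent copies of the walk asymptotically couple'' is not how uniqueness is usually obtained, and as stated it is too vague to be an argument. The standard route is to show that under (ii)--(iv) the sequence $S_n\cdot\bar{\mathbf{x}}$ converges in distribution to a \emph{random} direction $Z$ independent of $\bar{\mathbf{x}}$ (this uses the existence of proximal elements in $G_\mu$, which follows from non-compactness plus strong irreducibility), and then any invariant measure must equal the law of $Z$. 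Your positivity sketch is closer to the mark, though the passage from ``$\sigma$ is a coboundary'' to ``there is an invariant Euclidean structure'' hides a nontrivial step (one typically goes through a bounded measurable function $\psi$ on $\mathbb{P}^1(\mathbb{R})$ with $\sigma(A,\bar{\mathbf{x}})=\psi(A\cdot\bar{\mathbf{x}})-\psi(\bar{\mathbf{x}})$ $\mu\otimes\nu$-a.e., and then argues that $e^{\psi}$ defines a $G_\mu$-invariant norm up to scalar). None of this, however, is what the present paper is about.
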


Let $A=\begin{pmatrix}a&b\\c&d\end{pmatrix}$ be a $\mathrm{GL}(2,\mathbb R)$-valued random matrix. % and $X$ be an independent $\mathbb{P}^{1}\left(\mathbb{R}\right)$-valued random variable with distributions $\mu$ and $\nu$, respectively. Recall that we can identify $\mathbb{P}^1(\mathbb{R})$ with $\mathbb{R}\cup\left\{ \infty\right\}$. 
In this paper, we only study matrices $A$ with entry $a$ random and all other entries constant. Let us suppose that the distribution of $a$ is chosen such that the hypotheses of Theorem \ref{furstenberg} hold. Then by a simple computation \cite[pp. 3421]{mtwInvariant} we have that
\[
\lambda=  \int_{-\infty}^\infty\log|cx+d|\,\mathrm d\nu(x),
\] where $\nu$ is the unique $\mu$-invariant probability measure on $\mathbb{P}^{1}\left(\mathbb{R}\right)$. Hence, if $X$ is a random variable distributed according to $\nu$, then
\begin{equation}\label{expectation}
	\lambda=\mathbb E\big[\log|cX+d|\big].
\end{equation}
Moreover, if $A$ and $X$ are independent, we can also conclude that $A\cdot X$ has the same distribution as $X$, which we write as $A\cdot X\sim X$. This follows from the definition of $\mu$-invariance. Thus, a random variable $X$ with law given by the unique $\mu$-invariant distribution on $\mathbb{P}^{1}\left(\mathbb{R}\right)$ must satisfy

\begin{equation}\label{Invariant-Formula}
X\sim\frac{aX+b}{cX+d},
\end{equation}
where $a$ and $X$ are independent. Likewise, the law of any $\mathbb{P}^{1}\left(\mathbb{R}\right)$-valued random variable $X$ which satisfies \eqref{Invariant-Formula} is $\mu$-invariant hence it must be $\nu$. We make use of this distributional identity for the $\mu$-invariant distribution in later sections.

The following result by Le Page can be found in \cite[Theorem V.5.4]{bougerol} and gives a less-studied analogue to the Central Limit Theorem. 

\begin{thm}[Le Page]\label{lepageclt}\ \\
Define $\ell(M)=\max\{\log^+\|M\|,\log^+\|M^{-1}\|\}$ for $M\in\mathrm{GL}(2,\mathbb R)$. Let $\mu$ be a probability measure on the group $\mathrm{GL}(2,\mathbb R)$ and $\{Y_i\}_{i\ge1}$ be a sequence of i.i.d. random matrices distributed according to $\mu$. Moreover, let $G_\mu$ be the smallest closed subgroup containing the support of $\mu$. Suppose the following hold:
\begin{enumerate}[(i)]
	\item $\mathbb{E} \left[\exp \left(t~\ell (Y_1) \right) \right]<\infty$ for some $t>0$,
	\item $G_\mu$ is strongly irreducible,
	\item $\{|\det M|^{-1/2}M:M\in G_\mu\}$ is not contained in a compact subgroup of $\mathrm{GL}(2,\mathbb R).$
\end{enumerate}
Then there exists $\sigma>0$ such that for any $\mathbf{x}\in\mathbb R^2\setminus\{\mathbf{0}\}$, 
\[\frac1{\sqrt n}\left(\log\|S_n\mathbf{x}\|-n\lambda\right)\textrm{ and }\frac1{\sqrt n}\left(\log\|S_n\|-n\lambda\right)\]
converge weakly as $n\to\infty$ to a Gaussian random variable with mean $0$ and variance $\sigma^2$.
%\[\frac1{\sqrt n}\left(\log\|S_n\|-n\lambda\right)\to Z^\prime\]
\end{thm}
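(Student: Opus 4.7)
The plan is to follow the classical strategy for CLTs for products of random matrices, realizing $\log\|S_{n}\mathbf{x}\|$ as an additive cocycle over a Markov chain on $\mathbb{P}^{1}(\mathbb{R})$ and exploiting a spectral gap of the associated transfer operator. Define the cocycle $\sigma\colon\mathrm{GL}(2,\mathbb{R})\times\mathbb{P}^{1}(\mathbb{R})\to\mathbb{R}$ by $\sigma(g,\bar{\mathbf{x}})=\log(\|g\mathbf{x}\|/\|\mathbf{x}\|)$; this is well-defined on equivalence classes and satisfies $\sigma(gh,\bar{\mathbf{x}})=\sigma(g,h\cdot\bar{\mathbf{x}})+\sigma(h,\bar{\mathbf{x}})$. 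Iterating produces
\begin{equation*}
\log\|S_{n}\mathbf{x}\|-\log\|\mathbf{x}\|=\sum_{k=1}^{n}\sigma(Y_{k},\bar{X}_{k-1}),
\end{equation*}
where $\bar{X}_{k}=Y_{k}\cdots Y_{1}\cdot\bar{\mathbf{x}}$ is a Markov chain on $\mathbb{P}^{1}(\mathbb{R})$ with transition operator $Pf(\bar{\mathbf{x}})=\int f(A\cdot\bar{\mathbf{x}})\,d\mu(A)$. Hypotheses (ii)–(iii), after the normalization $A\mapsto|\det A|^{-1/2}A$ (which only shifts $\lambda$), are exactly what Furstenberg's Theorem \ref{furstenberg} requires, so there is a unique $\mu$-invariant probability measure $\nu$ on $\mathbb{P}^{1}(\mathbb{R})$, and $\nu$ is atomless.

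The crux is to prove quasi-compactness of $P$ on the Banach space $\mathcal{H}_{\alpha}$ of $\alpha$-Hölder functions on $\mathbb{P}^{1}(\mathbb{R})$ (for a Fubini-Study-type metric $d$) for some small $\alpha>0$. The key estimate is the average contraction
\begin{equation*}
\int d(A\cdot\bar{\mathbf{x}},A\cdot\bar{\mathbf{y}})^{\alpha}\,d\mu(A)\leq C\,\rho^{\alpha}\,d(\bar{\mathbf{x}},\bar{\mathbf{y}})^{\alpha}
\end{equation*}
with some $\rho<1$; hypothesis (i) (an exponential moment of $\ell(Y_{1})$) is precisely what permits $\alpha>0$ to be chosen so that this holds, while (ii)–(iii) guarantee that the action is genuinely contracting in expectation rather than conjugate to an isometry. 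Combined with unique ergodicity, one obtains a Doeblin-Fortet inequality for $P$ on $\mathcal{H}_{\alpha}$, yielding a spectral gap: the eigenvalue $1$ is simple with the constant eigenfunction, and the remaining spectrum lies in a disk of radius strictly less than $1$.

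With the gap in hand, the Poisson equation $(I-P)g=\bar{f}-\lambda$, where $\bar{f}(\bar{\mathbf{x}})=\int\sigma(A,\bar{\mathbf{x}})\,d\mu(A)$, admits a Hölder solution $g$. Substituting produces the martingale-coboundary decomposition
\begin{equation*}
\log\|S_{n}\mathbf{x}\|-n\lambda=M_{n}+g(\bar{X}_{0})-g(\bar{X}_{n})+R_{n},
\end{equation*}
where $M_{n}=\sum_{k=1}^{n}[\sigma(Y_{k},\bar{X}_{k-1})-\bar{f}(\bar{X}_{k-1})]+\sum_{k=1}^{n}[g(\bar{X}_{k})-Pg(\bar{X}_{k-1})]$ is a square-integrable stationary ergodic martingale (with respect to the natural filtration) after starting from $\nu$, and the boundary term $g(\bar{X}_{0})-g(\bar{X}_{n})$ together with the bounded remainder $R_{n}$ is $O(1)$. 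Billingsley's martingale CLT then gives $n^{-1/2}M_{n}\Rightarrow\mathcal{N}(0,\sigma^{2})$, and the bounded terms vanish after dividing by $\sqrt{n}$; transferring from $\nu$ to a deterministic starting $\bar{\mathbf{x}}$ uses the spectral gap once more. The conclusion for $\log\|S_{n}\|$ follows because $\log\|S_{n}\|-\sup_{\mathbf{x}}\log(\|S_{n}\mathbf{x}\|/\|\mathbf{x}\|)$ is nonpositive and bounded in $L^{2}$.

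I expect the principal obstacle to be the quantitative contraction estimate underpinning the spectral gap on $\mathcal{H}_{\alpha}$, since it demands a careful interplay between the exponential moment in (i), strong irreducibility in (ii) (to exclude invariant finite unions of lines that would obstruct contraction), and the non-compactness of $\{|\det M|^{-1/2}M:M\in G_{\mu}\}$ in (iii) (to produce genuine contraction on average). A secondary delicate point is $\sigma^{2}>0$: one argues by contradiction that $\sigma^{2}=0$ would force $\bar{f}-\lambda$ to be a measurable coboundary, which combined with (ii)–(iii) would make the normalized action conjugate to an isometry of $\mathbb{P}^{1}(\mathbb{R})$, contradicting (iii). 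Everything downstream of the spectral gap is then standard.
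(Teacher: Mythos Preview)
The paper does not prove Theorem~\ref{lepageclt}: it is stated in the preliminaries as a known result due to Le Page and cited from \cite[Theorem~V.5.4]{bougerol}, with no argument given. So there is no ``paper's own proof'' to compare your proposal against.

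That said, your sketch is a faithful outline of the classical proof strategy (cocycle over the projective Markov chain, spectral gap for the transfer operator on a H\"older space via exponential moments and contracting action, martingale--coboundary decomposition, then a martingale CLT and a separate argument for $\sigma^{2}>0$). As a sketch it is fine; as a self-contained proof it would of course require substantial work to make the contraction estimate and the positivity of the variance rigorous, but that is exactly what the references you are implicitly relying on do.
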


We remark that the relatively recent paper \cite{GL_CLT} has relaxed the exponential moment condition $(i)$ to a second moment condition which cannot be improved. In Section \ref{variance}, we use Monte Carlo simulations to approximate the value of $\sigma^2$ for two matrix models that satisfy the hypotheses of Theorem \ref{lepageclt}. %In one model the Lyapunov exponent can be computed explicitly, and in the other model $\lambda$ is also approximated. 

\section{$\Bernoulli \left(p\right)$ Parameter Model}	\label{bernp}

In this section we consider a random matrix model where the random entry follows a $\Bernoulli\left(p\right)$ distribution and the parameter of interest is $p$. Recall that a random variable $\epsilon\sim \Bernoulli\left(p\right)$ if ${\mathbb{P}\left(\epsilon=1\right)=p}$ and ${\mathbb{P}\left(\epsilon=0\right)=1-p}$. Let $\mu_p$ be the probability measure on $\mathrm{GL}(2,\mathbb R)$ given by 
 
\begin{equation}\label{BernoullypMatrix1}
\left(\begin{array}{cc}
\epsilon_p & 1\\
1 & 0
\end{array}\right), \ \epsilon_p\sim\Bernoulli\left(p\right),\ 0<p<1.
\end{equation}
It is straightforward to verify that $\mu_p$ satisfies hypotheses $(i)$-$(iv)$ of Theorem \ref{furstenberg}. We verify them here for completeness. For $(i)$, we see that $\mathbb{E}\left[\log^{+}\left\Vert Y_{1}\right\Vert \right]<\infty$
since $\epsilon_{p}$ has finite support. For $(ii)$, consider the subgroup $G$ generated by the possible realizations of \eqref{BernoullypMatrix1}. Since the determinant of each realization has absolute value $1$, so to does every matrix in $G$. Clearly, the closure of $G$, call it $\bar{G}$, is a closed subgroup that contains the support of $\mu_p$. Hence $G_{\mu_p}\subset\bar{G}$. Moreover, since the absolute value of the determinant is continuous, every matrix in $\bar{G}$ also has determinant with absolute value $1$. It follows that the same holds for $G_{\mu_p}$ as required. 

For $(iii)$, we first let $F_{0},F_{1},F_{2},F_{3},\dots$ be the usual Fibonacci
sequence $0,1,1,2,3,5,\dots$ Then
a simple calculation shows that for each positive integer $n$, we
have 
\[
\left(\begin{array}{cc}
1 & 1\\
1 & 0
\end{array}\right)^{n}=\left(\begin{array}{cc}
F_{n+1} & F_{n}\\
F_{n} & F_{n-1}
\end{array}\right).
\]
Since the powers of the matrix \eqref{BernoullypMatrix1} with $\epsilon_{p}=1$ must
be in $G_{\mu_p}$ and the norm of the powers grow arbitrarily large with
large $n$, it follows that $G_{\mu_p}$ is unbounded and hence not compact. 

Lastly, hypothesis $(iv)$ can be checked by way of an equivalent condition given in \cite[Proposition II.4.3]{bougerol}. This condition is met as long as for any $\bar{\mathbf{x}}\in\mathbb{P}^{1}\left(\mathbb{R}\right)$,
the set $S_{\bar{\mathbf{x}}}=\left\{ M\cdot\bar{\mathbf{x}}: M\in G_{\mu_p}\right\} $
has more than two elements. To see that this holds, suppose at least one of $x_1,x_2\in\mathbb{R}$ is nonzero and consider $\mathbf{x}=\left(\begin{array}{c}
x_1\\
x_2
\end{array}\right)$. Drawing the matrix $M$ from \eqref{BernoullypMatrix1} with $\epsilon_{p}=1$, we have 
\begin{align*}
M\cdot\bar{\mathbf{x}} & =\left(\begin{array}{cc}
1 & 1\\
1 & 0
\end{array}\right)\cdot\bar{\mathbf{x}}=\overline{\left(\begin{array}{c}
x_1+x_2\\
x_1
\end{array}\right)}=1+\frac{x_2}{x_1}\in S_{\bar{\mathbf{x}}},\\
M^2\cdot\bar{\mathbf{x}} 
&=
\left(\begin{array}{cc}
2 & 1\\
1 & 1
\end{array}\right)\cdot\bar{\mathbf{x}}=
\overline{\left(\begin{array}{c}
2x_1+x_2\\
x_1+x_2
\end{array}\right)}=1+\frac{x_1}{x_1+x_2}\in S_{\bar{\mathbf{x}}},\\
M^3\cdot\bar{\mathbf{x}}&=
\left(\begin{array}{cc}
3 & 2\\
2 & 1
\end{array}\right)\cdot\bar{\mathbf{x}} =
\overline{\left(\begin{array}{c}
3x_1+2x_2\\
2x_1+x_2
\end{array}\right)}=1+\frac{x_1+x_2}{2x_1+x_2}\in S_{\bar{\mathbf{x}}}.
\end{align*}
Since for any $\mathbf{x}$, each of these elements in $S_{\bar{\mathbf{x}}}$ is distinct, it follows that hypothesis $(iv)$ holds. 

\begin{comment}
 when
$\mathbf{x}=\left(\begin{array}{c}
x\\
1
\end{array}\right)$ and $x\neq0$. For the second case, we take $x=0$ in $\mathbf{x}=\left(\begin{array}{c}
x\\
1
\end{array}\right)$. Letting $M=\left(\begin{array}{cc}
1 & 1\\
1 & 0
\end{array}\right)$ and similarly computing $M^i\cdot\bar{\mathbf{x}}$ with $i=1,2,3$ also shows that $S_{\bar{\mathbf{x}}}$ has at least $3$ elements. Finally, we consider the case when 
 $\mathbf{x}=\left(\begin{array}{c}
1\\
0
\end{array}\right)$ where computing $M^i\cdot\bar{\mathbf{x}}$ with $i=1,2,3$ again shows that $S_{\bar{\mathbf{x}}}$ has at least three elements. This verifies that the condition equivalent to $(iv)$ holds for all $\bar{\mathbf{x}}\in\mathbb{P}^{1}\left(\mathbb{R}\right)$. 
\end{comment}

Since $\mu_p$ satisfies hypotheses $(i)$-$(iv)$ of Theorem \ref{furstenberg}, we know there exists a unique $\mu_p$-invariant distribution $\nu_p$ that satisfies \eqref{eq:invariance} and that $\nu_p$ is atomless. Then by \eqref{Invariant-Formula}, any random variable $X_p$ with law $\nu_p$ must satisfy the distributional identity 
\begin{equation}\label{Bernp2}
X_p\sim\frac{1}{X_p}+\epsilon_p,
\end{equation}
where $\epsilon_p\sim \Bernoulli\left(p\right)$ and is independent
of $X_p$. Likewise, the law of any $\mathbb{P}^{1}\left(\mathbb{R}\right)$-valued random variable $X_p$ which satisfies \eqref{Bernp2} is $\mu_p$-invariant hence it must be $\nu_p$. Using \eqref{Bernp2} and the fact that $\nu_p$ is atomless, it is not hard to see that $X_p\in (0,\infty)$ almost surely. See Goswami \cite{Goswami2004} for this fact and other facts about $X$, including an expression for its cumulative distribution function in terms of a continued fraction expansion. In Figures \ref{fig:histogram} and \ref{fig:CDF} we show the empirical distribution of $100\,000$ independent draws from $\nu_{1/2}$ and remark that the fractal nature of this probability measure is clearly apparent.

\begin{figure}[H]
\centering
\begin{subfigure}{.5\textwidth}
  \centering
\includegraphics[scale=0.5]{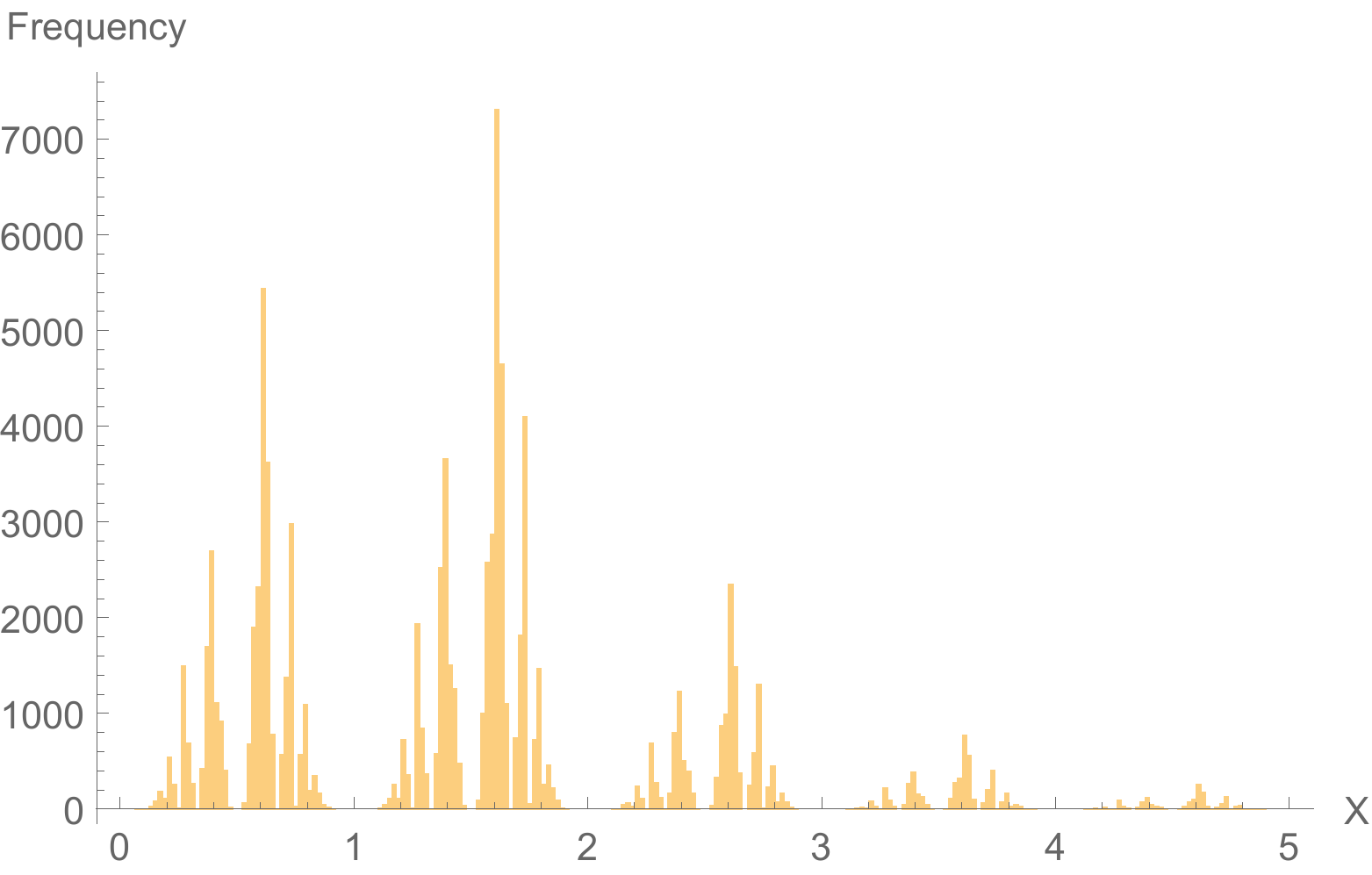}
\caption{Histogram}
\label{fig:histogram}
\end{subfigure}%
\begin{subfigure}{.5\textwidth}
\centering
\includegraphics[scale=0.5]{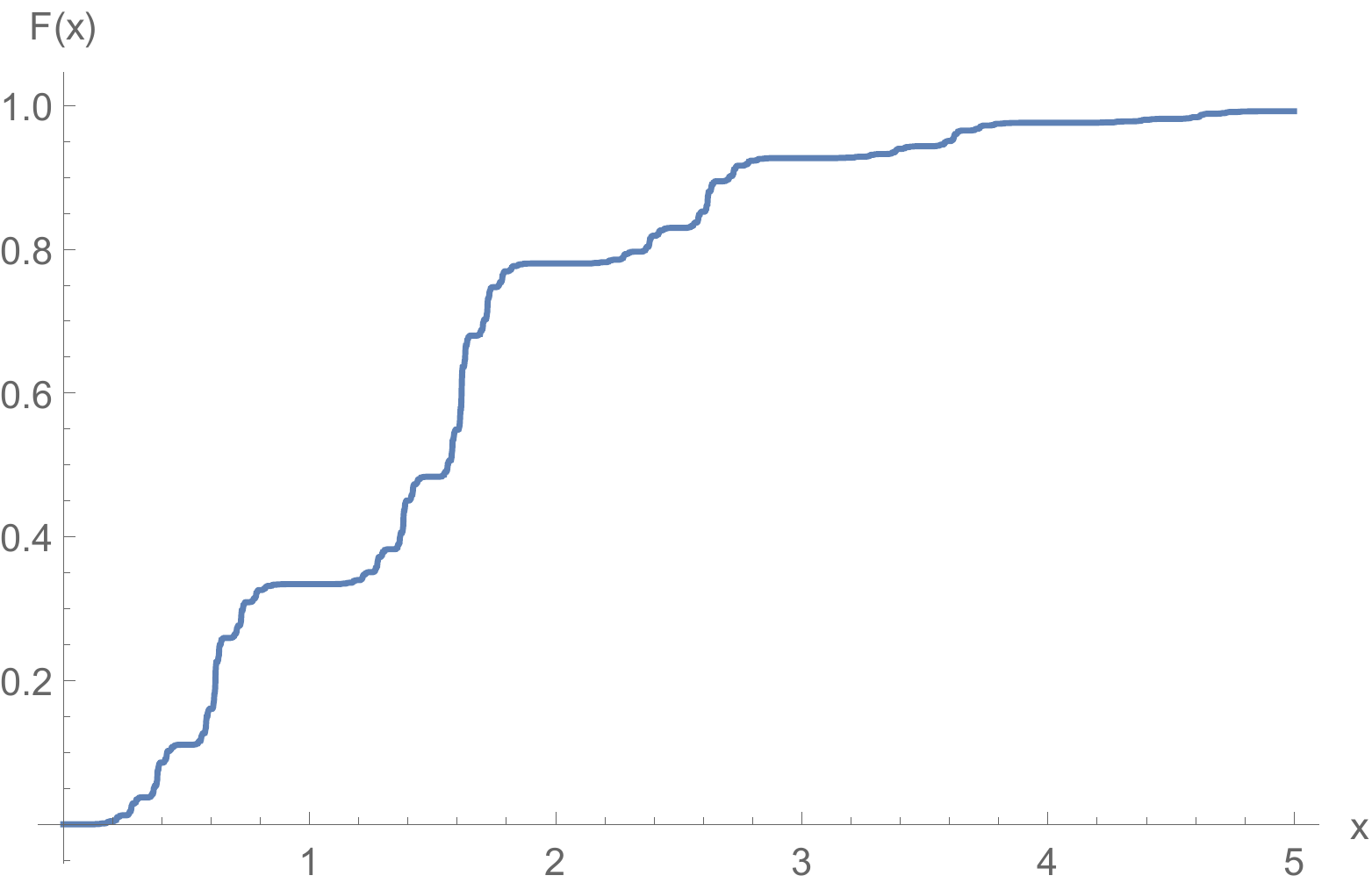}
\caption{CDF}
\label{fig:CDF}
\end{subfigure}
\caption{}
\end{figure}

Let $\lambda(p)$ be the Lyapunov exponent related to $\mu_p$. Using \eqref{expectation} and the fact that $X_p$ is non-negative, we can write the Lyapunov exponent associated with $\mu$ as 

\begin{equation}\label{eq:lambda_p}
\lambda(p)=\mathbb{E}\left[\log X_p\right].
\end{equation}

\subsection{The general $0<p<1$ case}\label{bernp1}

In this subsection we study $\lambda(p)$ for general $0<p<1$ and obtain two sided bounds depending on the parameter $p$. First we prove some identities for $\mathbb{E}\left[\log X_{p}\right]$. We begin by establishing an identity for $\mathbb{E}\left[\log X_p\right]$ which will be later generalized for the $p=\frac{1}{2}$ case and used in proving a limiting result.  

\begin{lem}\label{prop:p_identity}%\phanuel{Okay by me - PM}
If $X_p$ is a $\mathbb{P}^{1}\left(\mathbb{R}\right)$-valued random variable satisfying \eqref{Bernp2}, then
\[
0<\mathbb{E}\left[\log X_p\right]<\infty 
\] 
and
\[
\mathbb{E}\left[\log X_{p}\right]=\frac{p}{3}\mathbb{E}\left[\log\left(2X_{p}+1\right)\right].
\]
\end{lem}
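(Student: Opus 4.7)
My plan is to derive the identity by applying the distributional identity \eqref{Bernp2} twice, after handling finiteness as a preliminary.

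For finiteness, conditioning on $\epsilon_p \in \{0,1\}$ rewrites \eqref{Bernp2} as the decomposition $\log X_p \sim -\log X_p + \epsilon_p \log(1+X_p)$. Applying $(-\,\cdot\,)^+$ to both sides and noting that the $\epsilon_p=1$ branch contributes $0$ (since $\log X_p < \log(1+X_p)$) gives, after integration, $\mathbb{E}[(\log X_p)^-] = (1-p)\mathbb{E}[(\log X_p)^+]$ in $[0,\infty]$. On the other hand, the crude bound $\log(1 + 1/X_p) \le \log 2 + (\log X_p)^-$ combined with \eqref{Bernp2} yields $\mathbb{E}[(\log X_p)^+] \le \log 2 + \mathbb{E}[(\log X_p)^-]$. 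Combining these two gives $p\,\mathbb{E}[(\log X_p)^+] \le \log 2$, so both parts are finite.

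For the identity itself, integrating the decomposition $\log X_p \sim -\log X_p + \epsilon_p \log(1+X_p)$ (using independence of $\epsilon_p$ and $X_p$) produces the intermediate relation $\mathbb{E}[\log X_p] = \tfrac{p}{2}\mathbb{E}[\log(1+X_p)]$. I then apply \eqref{Bernp2} once more, this time to $1+X_p$: from $1 + X_p \sim \frac{(1+\epsilon_p)X_p + 1}{X_p}$, taking logs, expectations, and conditioning on $\epsilon_p$ gives
\[
p\,\mathbb{E}[\log(1+X_p)] = p\,\mathbb{E}[\log(2X_p+1)] - \mathbb{E}[\log X_p].
\]
Substituting $p\,\mathbb{E}[\log(1+X_p)] = 2\,\mathbb{E}[\log X_p]$ and simplifying yields the desired $3\,\mathbb{E}[\log X_p] = p\,\mathbb{E}[\log(2X_p+1)]$. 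Strict positivity is immediate from $X_p>0$ a.s.\ (so $\log(2X_p+1) > 0$), and is also consistent with $\lambda(p) > 0$ from Theorem \ref{furstenberg}.

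The principal obstacle is the finiteness step: a naive attempt to bound $X_p$ almost surely actually fails, since solving the fixed-point relations for $\operatorname{ess\,sup} X_p$ and $\operatorname{ess\,inf} X_p$ under \eqref{Bernp2} forces them to $\infty$ and $0$ respectively. The bootstrap between $\mathbb{E}[(\log X_p)^+]$ and $\mathbb{E}[(\log X_p)^-]$ outlined above is the mechanism that works, and its structure is also what suggests iterating the distributional identity to cascade from $\mathbb{E}[\log(1+X_p)]$ to $\mathbb{E}[\log(2X_p+1)]$.
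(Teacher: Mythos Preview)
Your derivation of the identity is essentially the paper's argument: both apply \eqref{Bernp2} once to obtain the intermediate relation $\mathbb{E}[\log X_p]=\tfrac{p}{2}\mathbb{E}[\log(1+X_p)]$, then apply it a second time inside $\log(1+X_p)$ and substitute back. The algebra is packaged slightly differently but the steps are the same.

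Where you differ is the finiteness step. The paper simply invokes Theorem~\ref{furstenberg}: since $\mu_p$ satisfies the Furstenberg hypotheses, $0<\lambda(p)<\infty$, and \eqref{eq:lambda_p} identifies $\lambda(p)$ with $\mathbb{E}[\log X_p]$, so both finiteness and strict positivity are inherited from the general theory. Your bootstrap between $\mathbb{E}[(\log X_p)^+]$ and $\mathbb{E}[(\log X_p)^-]$ is instead a self-contained argument that uses only the distributional identity \eqref{Bernp2} and elementary inequalities, with positivity then read off from the identity itself. This buys independence from the Furstenberg machinery (and would survive in settings where checking those hypotheses is less immediate), at the cost of a short extra computation; the paper's route is quicker here because the hypotheses of Theorem~\ref{furstenberg} have already been verified earlier in the section.
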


\begin{proof}
Let $X_p$ be a random variable satisfying \eqref{Bernp2}. Then $X_p$ has law $\nu_p$ given by Theorem \ref{furstenberg} applied to random matrices of the form \eqref{BernoullypMatrix1}. Consequently, we have that $0<\lambda(p)<\infty$ and it follows from \eqref{eq:lambda_p} that $\mathbb{E}\left[\log X_p\right]$ is positive and finite. Using \eqref{Bernp2}, we start by writing
\begin{align}
\mathbb{E}\left[\log X_p\right] & =\mathbb{E}\left[\log\left(\frac{1}{X_p}+\epsilon\right)\right]\nonumber \\
 & =(1-p)\mathbb{E}\left[\log\left(\frac{1}{X_p}\right)\right]+p\mathbb{E}\left[\log\left(\frac{1}{X_p}+1\right)\right]\nonumber \\
 & =-(1-p)\mathbb{E}\left[\log X_p\right]+p\mathbb{E}\left[\log\left(\frac{1+X_p}{X_p}\right)\right]\nonumber \\
 & =-\mathbb{E}\left[\log X_p\right]+p\mathbb{E}\left[\log\left(1+X_p\right)\right].\label{eq:7}
\end{align}
Adding $\mathbb{E}\left[\log X_p\right]$ to both sides of \eqref{eq:7}
and dividing by 2 results in
\begin{equation}
\mathbb{E}\left[\log X_p\right]=\frac{p}{2}\mathbb{E}\left[\log\left(1+X_p\right)\right].\label{eq:8}
\end{equation}
Continuing in a similar fashion with \eqref{eq:8}, we obtain
\begin{align}
\mathbb{E}\left[\log X_p\right] & =\frac{p}{2}\mathbb{E}\left[\log\left(1+\frac{1}{X_p}+\epsilon\right)\right]\nonumber \\
 & =\frac{p(1-p)}{2}\mathbb{E}\left[\log\left(1+\frac{1}{X_p}\right)\right]+\frac{p^2}{2}\mathbb{E}\left[\log\left(2+\frac{1}{X_p}\right)\right]\nonumber \\
 & =\frac{p(1-p)}{2}\mathbb{E}\left[\log\left(\frac{X_p+1}{X_p}\right)\right]+\frac{p^2}{2}\mathbb{E}\left[\log\left(\frac{2X_p+1}{X_p}\right)\right]\nonumber \\
 & =\frac{p(1-p)}{2}\mathbb{E}\left[\log\left(X_p+1\right)\right]+\frac{p^2}{2}\mathbb{E}\left[\log\left(2X_p+1\right)\right]-\frac{p}{2}\mathbb{E}\left[\log X_p\right]\nonumber \\
 & =\left(1-\frac{3p}{2}\right)\mathbb{E}\left[\log X\right]+\frac{p^2}{2}\mathbb{E}\left[\log\left(2X+1\right)\right],\label{eq:9}
\end{align}
where we use \eqref{eq:8} in the last equality. Subtracting $\left(1-\frac{3p}{2}\right)\mathbb{E}\left[\log X\right]$ from both
sides of \eqref{eq:9} leads to
\begin{equation*}
\mathbb{E}\left[\log X\right]=\frac{p}{3}\mathbb{E}\left[\log\left(2X+1\right)\right],
\end{equation*}
completing the proof. 
\end{proof}

\begin{lem}\label{Expectation-Identity1}%\phanuel{Okay by me - PM}
If $X_p$ is a $\mathbb{P}^{1}\left(\mathbb{R}\right)$-valued random variable satisfying \eqref{Bernp2}, then
\begin{equation}
\mathbb{E}\left[\log\left(X_{p}\right)\cdot\mathbf{1}_{\left(X_{p}<1\right)}\right]=\left(p-1\right)\mathbb{E}\left[\log\left(X_{p}\right)\cdot\mathbf{1}_{\left(X_{p}>1\right)}\right],\label{eq:c}
\end{equation}

\begin{equation}
\mathbb{E}\left[\log \left(X_{p}\right)\cdot\mathbf{1}_{\left(X_{p}>1\right)}\right]=\frac{1}{p}\mathbb{E}\left[\log X_{p}\right],\label{eq:a}
\end{equation} 
and
\begin{equation}
\mathbb{E}\left[\log\left(X_{p}\right)\cdot\mathbf{1}_{\left(X_{p}<1\right)}\right]=\frac{p-1}{p}\mathbb{E}\left[\log X_{p}\right].\label{eq:b}
\end{equation}

\end{lem}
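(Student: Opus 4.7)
The plan is to establish \eqref{eq:c} first by applying the distributional identity \eqref{Bernp2} to a carefully chosen test function, and then to derive \eqref{eq:a} and \eqref{eq:b} as algebraic consequences using the decomposition of $\mathbb{E}[\log X_p]$ across the event $\{X_p < 1\} \cup \{X_p > 1\}$. Since $\nu_p$ is atomless by Theorem~\ref{furstenberg} (and hence $\mathbb{P}(X_p = 1) = 0$), and since $X_p \in (0,\infty)$ almost surely, we can split $\log X_p$ cleanly into its negative and positive parts indexed by these two events.

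For \eqref{eq:c}, I would apply the distributional identity \eqref{Bernp2} to the bounded-below function $f(x) = \log(x)\,\mathbf{1}_{(x<1)}$, writing
\[
\mathbb{E}\!\left[\log(X_p)\,\mathbf{1}_{(X_p < 1)}\right] = (1-p)\,\mathbb{E}\!\left[\log\!\left(\tfrac{1}{X_p}\right)\mathbf{1}_{(1/X_p < 1)}\right] + p\,\mathbb{E}\!\left[\log\!\left(\tfrac{1}{X_p}+1\right)\mathbf{1}_{(1/X_p + 1 < 1)}\right].
\]
The key observation is that because $X_p > 0$ almost surely, $\tfrac{1}{X_p} + 1 > 1$ always, so the second term vanishes identically. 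In the first term, $\{1/X_p < 1\} = \{X_p > 1\}$ (up to a null set) and $\log(1/X_p) = -\log X_p$, yielding $\mathbb{E}[\log(X_p)\mathbf{1}_{(X_p<1)}] = -(1-p)\,\mathbb{E}[\log(X_p)\mathbf{1}_{(X_p>1)}]$, which is precisely \eqref{eq:c}.

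For \eqref{eq:a} and \eqref{eq:b}, I would simply add the two halves: since $\mathbb{P}(X_p=1)=0$,
\[
\mathbb{E}[\log X_p] = \mathbb{E}[\log(X_p)\mathbf{1}_{(X_p<1)}] + \mathbb{E}[\log(X_p)\mathbf{1}_{(X_p>1)}],
\]
and substituting \eqref{eq:c} into this collapses the right-hand side to $p\,\mathbb{E}[\log(X_p)\mathbf{1}_{(X_p>1)}]$, giving \eqref{eq:a} after dividing by $p$. Then \eqref{eq:b} follows by combining \eqref{eq:c} and \eqref{eq:a}.

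The only subtle point is justifying that the integrals in the application of \eqref{Bernp2} to $f(x) = \log(x)\mathbf{1}_{(x<1)}$ are well-defined and that we may indeed swap the decomposition freely — this is guaranteed because Lemma~\ref{prop:p_identity} already gives $0 < \mathbb{E}[\log X_p] < \infty$ and because each piece $\mathbb{E}[\log(X_p)\mathbf{1}_{(X_p<1)}]$ and $\mathbb{E}[\log(X_p)\mathbf{1}_{(X_p>1)}]$ has a definite sign ($\leq 0$ and $\geq 0$ respectively), so no cancellation issues arise. I do not anticipate a serious obstacle here; the main step is recognizing the vanishing of the $p$-weighted term coming from $\epsilon_p = 1$, which is the mechanism that links the ``below $1$'' and ``above $1$'' pieces.
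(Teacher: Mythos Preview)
Your proposal is correct and follows essentially the same approach as the paper: apply the distributional identity \eqref{Bernp2} to $\log(x)\mathbf{1}_{(x<1)}$, observe that the $\epsilon_p=1$ term vanishes because $1/X_p+1>1$ on the support, obtain \eqref{eq:c}, and then derive \eqref{eq:a} and \eqref{eq:b} by combining \eqref{eq:c} with the decomposition $\mathbb{E}[\log X_p]=\mathbb{E}[\log(X_p)\mathbf{1}_{(X_p<1)}]+\mathbb{E}[\log(X_p)\mathbf{1}_{(X_p>1)}]$. Your added remarks on integrability and atomlessness are appropriate justifications that the paper leaves implicit.
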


\begin{proof}
Recalling that the distribution of $X_p$ has non-negative support, observe that
\begin{eqnarray*}
\mathbb{E}\left[\log\left(X_{p}\right)\cdot\mathbf{1}_{\left(X_{p}<1\right)}\right] & = & p~\mathbb{E}\left[\log\left(\frac{1}{X_{p}}+1\right)\cdot \mathbf{1}_{\left(\frac{1}{X_{p}}+1<1\right)}\right]+\left(1-p\right)\mathbb{E}\left[\log\left(\frac{1}{X_{p}}\right)\cdot \mathbf{1}_{\left(\frac{1}{X_{p}}<1\right)}\right]\\
 & = & 0+\left(1-p\right)\mathbb{E}\left[\log\left(\frac{1}{X_{p}}\right)\cdot \mathbf{1}_{\left(X_{p}>1\right)}\right]\\
 & = & \left(p-1\right)\mathbb{E}\left[\log\left(X_{p}\right)\cdot \mathbf{1}_{\left(X_{p}>1\right)}\right].
\end{eqnarray*}
This proves \eqref{eq:c} which, along with the fact that the distribution of $X_p$ is atomless, allows us to write
\begin{eqnarray*}
\mathbb{E}\left[\log X_{p}\right] & = & \mathbb{E}\left[\log\left(X_{p}\right)\cdot \mathbf{1}_{\left(X_{p}>1\right)}\right]+\mathbb{E}\left[\log\left(X_{p}\right)\cdot \mathbf{1}_{\left(X_{p}<1\right)}\right]\\
 & = & \mathbb{E}\left[\log\left(X_{p}\right)\cdot \mathbf{1}_{\left(X_{p}>1\right)}\right]+\left(p-1\right)\mathbb{E}\left[\log\left(X_{p}\right)\cdot \mathbf{1}_{\left(X_{p}>1\right)}\right]\\
 & = & p~\mathbb{E}\left[\log\left(X_{p}\right)\cdot \mathbf{1}_{\left(X_{p}>1\right)}\right]
\end{eqnarray*}
which proves \eqref{eq:a}. Combining these two identities now leads to \eqref{eq:b}.
\end{proof}

Next we use these results to establish bounds on the Lyapunov exponent which are dependent on $p$.

\begin{thm}\label{UBprop}%\phanuel{Okay by me - PM}
Let $\mu_p$ be the probability measure on $\mathrm{GL}(2,\mathbb R)$ given by \eqref{BernoullypMatrix1}. Then the Lyapunov exponent $\lambda(p)$ associated with $\mu_p$ can be estimated by 
\[
\frac{p\log 3}{4-p}\leq\lambda(p)\leq\frac{p\log 3}{2}.
\]

\end{thm}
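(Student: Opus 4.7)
The plan is to combine the identity $\mathbb{E}[\log X_p] = \tfrac{p}{3}\,\mathbb{E}[\log(2X_p+1)]$ from Lemma \ref{prop:p_identity} with the conditional identities \eqref{eq:a} and \eqref{eq:b} from Lemma \ref{Expectation-Identity1}, after first sandwiching $\log(2x+1)$ by simple pointwise inequalities that separate the regions $\{x<1\}$ and $\{x>1\}$.

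For the upper bound, I would observe that for every $x\geq0$,
\[
\log(2x+1)\;\leq\;\log 3+\log(x)\,\mathbf{1}_{(x>1)}.
\]
Indeed, on $\{x\geq 1\}$ we have $2x+1\leq 3x$, and on $\{x<1\}$ we have $2x+1<3$. Taking expectations with $x=X_p$ and applying \eqref{eq:a} gives
\[
\mathbb{E}[\log(2X_p+1)]\;\leq\;\log 3+\tfrac{1}{p}\,\mathbb{E}[\log X_p].
\]
Plugging this into Lemma \ref{prop:p_identity} yields $\lambda(p)\leq \tfrac{p\log 3}{3}+\tfrac{1}{3}\lambda(p)$, and solving produces the desired upper bound $\lambda(p)\leq \tfrac{p\log 3}{2}$.

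For the lower bound, I would use the complementary pointwise inequality
\[
\log(2x+1)\;\geq\;\log 3+\log(x)\,\mathbf{1}_{(x<1)},
\]
valid on $[0,\infty)$: on $\{x\geq 1\}$ the right-hand side is $\log 3\leq \log(2x+1)$, and on $\{x<1\}$ the inequality $2x+1\geq 3x$ rearranges to the claim. Taking expectations and applying \eqref{eq:b} gives
\[
\mathbb{E}[\log(2X_p+1)]\;\geq\;\log 3+\tfrac{p-1}{p}\,\mathbb{E}[\log X_p].
\]
Then Lemma \ref{prop:p_identity} yields $\lambda(p)\geq \tfrac{p\log 3}{3}+\tfrac{p-1}{3}\lambda(p)$, i.e.\ $\tfrac{4-p}{3}\lambda(p)\geq \tfrac{p\log 3}{3}$, which rearranges to $\lambda(p)\geq \tfrac{p\log 3}{4-p}$.

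The only subtlety is justifying these expectation manipulations: the quantities $\mathbb{E}[\log(X_p)\mathbf{1}_{(X_p<1)}]$ and $\mathbb{E}[\log(X_p)\mathbf{1}_{(X_p>1)}]$ must be finite, but this is immediate from Lemma \ref{prop:p_identity} which guarantees $0<\mathbb{E}[\log X_p]<\infty$, combined with the fact that both terms have the same sign as each other after accounting for the factor $(p-1)$ (by \eqref{eq:c}). The main ``obstacle'' is really just recognizing the right pointwise envelopes for $\log(2x+1)$; once found, the algebra closes cleanly thanks to the identities from Lemmas \ref{prop:p_identity} and \ref{Expectation-Identity1}.
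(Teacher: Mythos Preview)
Your proof is correct and follows essentially the same approach as the paper: split $\mathbb{E}[\log(2X_p+1)]$ according to $\{X_p<1\}$ and $\{X_p>1\}$, use $2x+1\leq 3x$ on $x\geq 1$ and $2x+1\leq 3$ on $x<1$ for the upper bound (and the reversed inequalities for the lower bound), then invoke \eqref{eq:a} and \eqref{eq:b} together with Lemma~\ref{prop:p_identity}. The only cosmetic difference is that you package the case split as a single pointwise envelope $\log 3+\log(x)\mathbf{1}_{(x\gtrless 1)}$, whereas the paper writes out the two regions separately; the resulting algebra is identical.
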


\begin{proof}
Beginning with the upper estimate, first note that $\log(2x+1)\leq \log(3x)$ for $x\geq 1$. Now using Lemma \ref{prop:p_identity} and \eqref{eq:a}, we can write 
\begin{eqnarray}
\mathbb{E}\left[\log X_{p}\right] & = & \frac{p}{3}\mathbb{E}\left[\log(2X_{p}+1)\right]\nonumber \\
 & = & \frac{p}{3}\mathbb{E}\left[\log(2X_{p}+1)\cdot \mathbf{1}_{\left(X_{p}<1\right)}+\log(2X_{p}+1)\cdot \mathbf{1}_{\left(X_{p}>1\right)}\right]\nonumber \\
 & \leq & \frac{p}{3}\Big(\log 3~\mathbb{P}\left(X_{p}<1\right)+\mathbb{E}\left[\log(3X_{p})\cdot \mathbf{1}_{\left(X_{p}>1\right)}\right]\Big)\nonumber \\
 & = & \frac{p}{3}\Big(\log3+\mathbb{E}\left[\log(X_{p})\cdot \mathbf{1}_{\left(X_{p}>1\right)}\right]\Big)\nonumber \\
 & = & \frac{p}{3}\log3+\frac{1}{3}\mathbb{E}\left[\log X_{p}\right].\label{eq:g}
\end{eqnarray}
Subtracting $\frac{1}{3}\mathbb{E}\left[\log X_{p}\right]$
from both sides of \eqref{eq:g} and recalling \eqref{eq:lambda_p} leads to the desired result. 

For the lower estimate, we proceed similarly, noting that $\log(2x+1)\geq\log(3x)$ for $0<x\leq 1$ and using \eqref{eq:b} instead of \eqref{eq:a} to write
\begin{eqnarray}
\mathbb{E}\left[\log X_{p}\right] & = & \frac{p}{3}\mathbb{E}\left[\log(2X_{p}+1)\cdot \mathbf{1}_{\left(X_{p}<1\right)}+\log(2X_{p}+1)\cdot \mathbf{1}_{\left(X_{p}>1\right)}\right]\nonumber \\
 & \geq & \frac{p}{3}\Big(\mathbb{E}\left[\log(3X_{p})\cdot \mathbf{1}_{\left(X_{p}<1\right)}\right]+\log3~\mathbb{P}\left(X_{p}>1\right)\Big)\nonumber \\
 & = & \frac{p}{3}\Big(\log3+\mathbb{E}\left[\log(X_{p})\cdot \mathbf{1}_{\left(X_{p}<1\right)}\right]\Big)\nonumber \\
 & = & \frac{p}{3}\log3+\frac{p-1}{3}\mathbb{E}\left[\log X_{p}\right].\label{eq:h}
\end{eqnarray}
Now the lower bound follows from a simple rearrangement of \eqref{eq:h}.
\end{proof}

\subsection{Approximating $\lambda(p)$ by simulation}\label{sec:approx}\hfill\\

Let $\{Y_i\}_{i\ge1}$ be an i.i.d. sequence drawn from $\mu_p$, and for some $\mathbf{x}\in\mathbb{R}^2$ with $\|\mathbf{x}\|=1$, construct $\{U_i\}_{i\geq 0}$ recursively by $U_0=\mathbf{x}$ and $U_i=Y_i\frac{U_{i-1}}{\|U_{i-1}\|}$. Now, with $S_n=Y_n Y_{n-1}\cdots Y_2 Y_1$ and $S_0=Y_0=I$, we have
\begin{align}
\frac{1}{n}\log\|S_n\mathbf{x}\|&=\frac{1}{n}\sum_{i=1}^n\log\frac{\|S_i\mathbf{x}\|}{\|S_{i-1}\mathbf{x}\|}\nonumber \\
&=\frac{1}{n}\sum_{i=1}^n\log\left\|Y_i\frac{Y_{i-1}\dots Y_1\mathbf{x}}{\|Y_{i-1}\dots Y_1\mathbf{x}\|}\right\|\nonumber \\
&=\frac{1}{n}\sum_{i=1}^n\log\left\|U_i\right\|.\label{eq:lln_approx}
\end{align}
Hence it follows from Theorem \ref{furstenbergkesten} that we can approximate $\lambda$ by the right-hand side of \eqref{eq:lln_approx} with $n$ large. Since the $\log\left\|U_i\right\|$ terms aren't growing with $i$, this avoids numerical overflow issues and makes for a robust Monte Carlo scheme.

In Figure \ref{fig:BernPLambdaBounds}, we plot simulations for $\lambda(p)$ in black and the upper and lower bounds from Theorem \ref{UBprop} in blue. We discretize $[0,1]$ into sub-intervals of length $0.01$ and use $n=1\,000\,000$ in the Monte Carlo scheme described above.

\begin{figure}[H]
\centering
\includegraphics[scale=0.5]{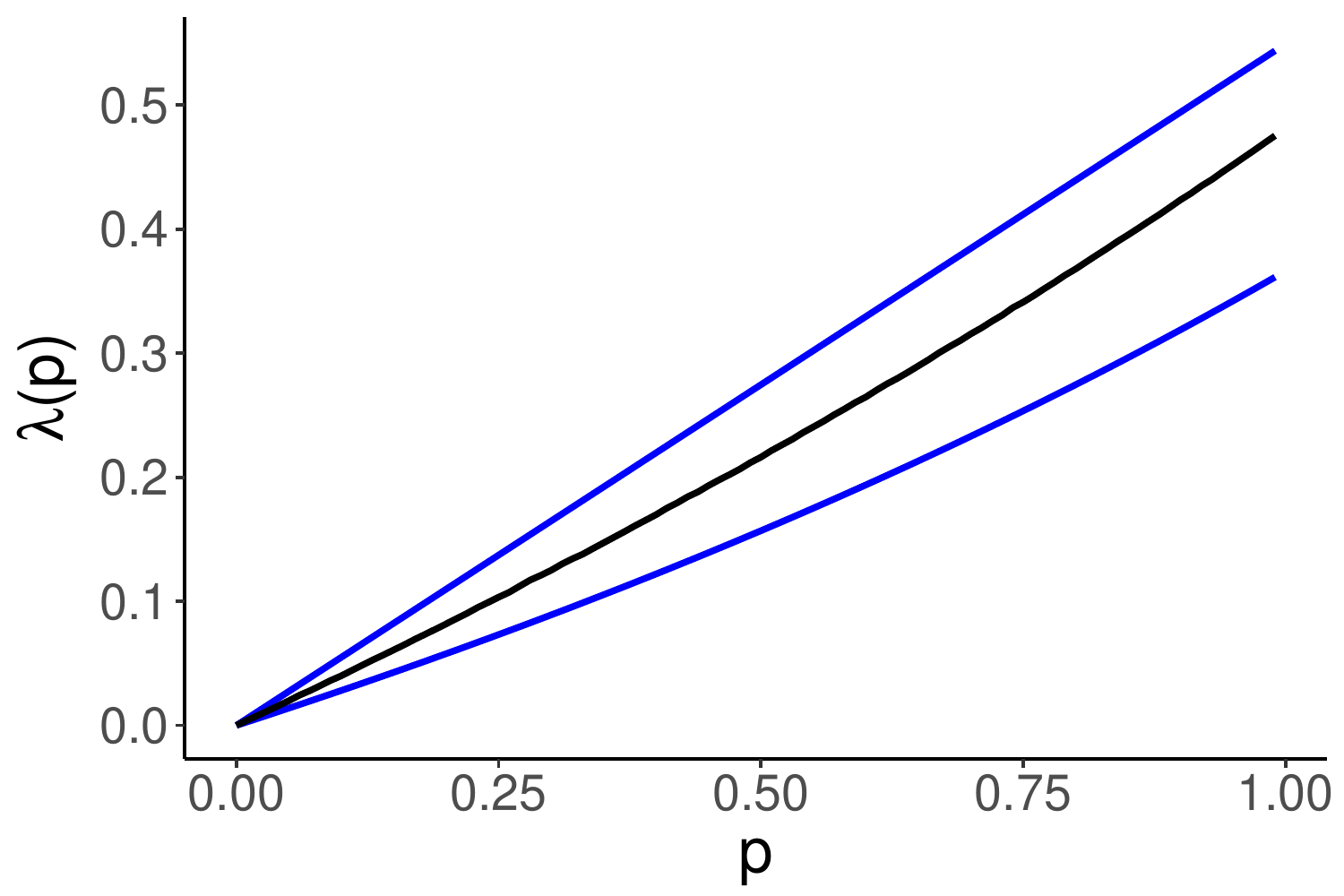}
\caption{$n=1\,000\,000$}
\label{fig:BernPLambdaBounds}
\end{figure}

\subsection{The $p=\frac{1}{2}$ case}\label{bernhalf}

In this section we study $\lambda :=\lambda\left(\frac{1}{2}\right)$ in more detail. To set notation, recall that a random variable $\epsilon\sim \Bernoulli\left(\frac{1}{2}\right)$ if $\mathbb{P}\left(\epsilon=1\right)=\mathbb{P}\left(\epsilon=0\right)=\frac{1}{2}$. The probability measure $\mu$ on $\mathrm{GL}(2,\mathbb R)$ that we consider is given by 
\begin{equation}
\left(\begin{array}{cc}
\epsilon & 1\\
1 & 0
\end{array}\right),\ \epsilon\sim \Bernoulli\left(\frac{1}{2}\right).\,\,\,\,\,\label{Ber(1/2) - Intro}
\end{equation}
We know by the general $p$ case that there exists a unique $\mu$-invariant distribution $\nu$ that satisfies \eqref{eq:invariance} and that $\nu$ is atomless. Then by \eqref{Invariant-Formula}, any random variable $X$ with law $\nu$ must satisfy the distributional identity 
\begin{equation}\label{Invariant1}
X\sim\frac{1}{X}+\epsilon,
\end{equation}
where $\epsilon\sim \Bernoulli\left(\frac{1}{2}\right)$ and is independent
of $X$.
Using \eqref{expectation} and the fact that $X$ is non-negative, we can write the Lyapunov exponent associated with $\mu$ as  
\begin{equation}\label{eq:lambda_log}
\lambda=\mathbb{E}\left[\log X\right].
\end{equation}

Unlike in the general case, we will be able to obtain a sequence of upper and lower bounds that converge to $\lambda$. Recall that by Lemma \ref{prop:p_identity} for $p=\frac{1}{2}$ we showed that \[
0<\mathbb{E}\left[\log X\right]<\infty 
\] 
and
\begin{equation}\label{FirstIdentity}
\mathbb{E}\left[\log X\right]=\frac{1}{6}\mathbb{E}\left[\log\left(2X+1\right)\right].
\end{equation}

We will prove a string of identities akin to equation \eqref{FirstIdentity} in a similar fashion. Here we list a few examples.  
\begin{equation}\label{Expectation of Poly}
\begin{aligned}
\mathbb{E}\left[\log X\right] & =\frac{1}{6}\mathbb{E}\left[\log\left(2X+1\right)\right] \\
 & =\frac{1}{14}\mathbb{E}\left[\log\left(3X+2\right)(X+2)\right] \\
 & =\frac{1}{32}\mathbb{E}\left[\log\left(5X+3\right)\left(3X+1\right)\left(2X+3\right)\left(2X+1\right)\right] \\
 & =\frac{1}{72}\mathbb{E}\left[\log\left(8X+5\right)\left(4X+3\right)\left(5X+2\right)\left(3X+2\right)\left(3X+5\right)\left(X+3\right)\left(3X+2\right)\left(X+2\right)\right]\\
 & \ \ \vdots
\end{aligned}
\end{equation}
 The string of identities above is obtained by iteratively exploiting the distributional equivalence of $X$ and $\frac{1}{X}+\epsilon$, the independence of $X$ and $\epsilon$, and elementary logarithmic identities. We will later see that an interesting pattern emerges. At the first step of the iteration, we are looking at the expected value of the $\log$ of one affine function of $X$ that is obtained by taking the inner product of the vector $(2,1)$ and the vector $(X,1)$. As we move to the second step of the iteration, we encounter the expectation of the $\log$ of the product of two affine functions of $X$. The first one is obtained by taking the inner product of $(3,2)$ and $(X,1)$, while the second is obtained by taking the inner product of $(1,2)$ and $(X,1)$. At the third step, we encounter the expected value of the $\log$ of the product of four $\left(=2^{3-1}\right)$ affine functions of $X$; these are obtained by respectively taking the inner product of $(X,1)$ with the vectors $(5,3)$, $(3,1)$, $(2,3)$, and $(2,1)$. 

In what follows, we represent the vectors generating the aforesaid affine functions of $X$ via inner products with $(X,1)$, which we call ``coefficient pairs'', in an array where the row number corresponding to the $n^{\text{th}}$ step of the iteration is $n-1$. The first four rows of the array are shown below. We use the symbol  $\mapsto$  to map the collection of coefficient pairs to the real number representing the product of the sum of entries in each coefficient pair in the row; we make extensive use of these quantities later on. 
\begin{equation}\label{an,bn array}
\begin{aligned}
n=0 & \hspace{6mm}\left(2,1\right) \mapsto 3 \\
n=1 & \hspace{6mm}\left(3,2\right)\left(1,2\right)  \mapsto 5\cdot3 = 15\\
n=2 & \hspace{6mm}\left(5,3\right)\left(3,1\right)\left(2,3\right)\left(2,1\right) \mapsto 8\cdot4\cdot5\cdot3=480 \\
n=3 & \hspace{6mm}\left(8,5\right)\left(4,3\right)\left(5,2\right)\left(3,2\right)\left(3,5\right)\left(1,3\right)\left(3,2\right)\left(1,2\right)  \mapsto 13\cdot7\cdot7\cdot5\cdot8\cdot4\cdot5\cdot3=1528800 \\
%n=4 & \hspace{6mm}\left(13,8\right)\left(7,4\right)\left(7,5\right)\left(5,3\right)\left(8,3\right)\left(4,1\right)\left(3,2\right)\left(5,2\right) \\ & \hspace{6mm}\left(5,8\right)\left(3,4\right)\left(2,5\right)\left(2,3\right)\left(5,3\right)\left(3,1\right)\left(1,2\right)\left(3,2\right)\\
% & \hspace{6mm}\mapsto 59668697090000 \\
\vdots\hspace{3.2mm} & \hspace{75.5mm}\vdots
\end{aligned}
\end{equation}
For the $k^\text{th}$ coefficient pair in row $n$, let $a_{n}^{k}$ denote the first element and $b_{n}^{k}$ the second. 
To illustrate this notational convention, consider the example
$\frac{1}{14}\mathbb{E}\left[\log\left(3X+2\right)\left(X+2\right)\right]$
from \eqref{Expectation of Poly}. This is in row $n=1$, so we would refer to the 3 in $\left(3X+2\right)$
as $a_{1}^{1}$ and the 2 as $b_{1}^{1}$. Similarly, the coefficient of $X$ in $\left(X+2\right)$ would be labeled $a_{1}^{2}$ and the
$2$ would be labeled $b_{1}^{2}$. In terms of $a^k_n$ and $b^k_n$, the expression is $\frac{1}{14}\mathbb{E}\left[\log\left(a_{1}^{1}X+b_{1}^{1}\right)\left(a_{1}^{2}X+b_{1}^{2}\right)\right]$. Now we can define the multi-level recursion that describes the array given in \eqref{an,bn array}.
\begin{defn}\label{def:recursion}
Set $a_{0}^{1}=2$ and $b_{0}^{1}=1$. For any $n\in \mathbb{Z}_{\geq 0}$, define 
\[
\begin{array}{ccl}
\left(a_{n+1}^{k},b_{n+1}^{k}\right):=\left(a_{n}^{k}+b_{n}^{k},a_{n}^{k}\right),\hfill \hfill & \mbox{ for }k=1,\dots,2^{n},\hfill\\
\left(a_{n+1}^{k},b_{n+1}^{k}\right):=\left(b_{n}^{k-2^{n}},a_{n}^{k-2^{n}}\right),\hfill \hfill& \mbox{ for }k=2^{n}+1,\dots,2^{n+1}.\hfill\\
\\
\end{array}
\]
\end{defn}

We observe several conspicuous patterns in \eqref{an,bn array} which are implicit in Definition \ref{def:recursion}. For instance, row $n$ is made up of $2^{n}$ pairs and the second half of row $n$ is simply row $n-1$ where the elements within the coefficient pairs have been switched. One property that will prove useful is the fact that the first coefficient pair in each row dominates the other pairs occurring in that row in the sense that
\begin{equation}\label{eq:dominate}
a_n^1\geq a_n^k~\text{ and }~b_n^1\geq b_n^k~\text{ for all }~1\leq k\leq 2^n.
\end{equation}
This follows from the recursion in Definition \ref{def:recursion} and induction on $n$.

To exhibit a less obvious pattern, we first recall that a ``Fibonacci-like sequence'' of numbers $f_0,f_1,f_2\dots$ is a sequence determined by the initial values $f_0,f_1$ such that \[
f_{n+1}=f_{n}+f_{n-1}
\]
for all $n\in \mathbb{N}$. When $f_0=0,f_1=1$, we recover the standard Fibonacci sequence. Fibonacci-like sequences can be expressed by an explicit formula. Let
$f_{n}(f_0,f_1)$ represent the $n$th term in the sequence given initial values $f_0,f_1$. If 
\[
\phi_{1}=\frac{1+\sqrt{5}}{2}\text{ and }\phi_{2}=\frac{1-\sqrt{5}}{2},
\]
then 
\begin{equation}
f_{n}(f_{0},f_{1})=\frac{f_{1}-f_{0}\phi_{2}}{\sqrt{5}}\left(\phi_{1}\right)^{n}+\frac{f_{0}\phi_{1}-f_{1}}{\sqrt{5}}\left(\phi_{2}\right)^{n}.\label{fibonacci}
\end{equation}
Now note that given $n\in\mathbb{N}$ and $k\in\left\{ 1,\dots,2^{n-1}\right\}$,
we have 
\[
a_{n+1}^{k}=a_{n}^{k}+b_{n}^{k}=a_{n}^{k}+a_{n-1}^{k}
\]
and
\[
b_{n+1}^{k}=a_{n}^{k}=a_{n-1}^k+b_{n-1}^{k}=b_{n}^k+b_{n-1}^{k}.
\]
Thus, for each $k$, the sequences $\{a_{n}^{k}\}$ and $\{b_{n}^{k}\}$ will be Fibonacci-like sequences in $n$ for $n$ large enough. 

We use these observations to help establish bounds on the Lyapunov exponent. In order to find
suitable estimates, we first need to establish some preliminary results. These involve proving the string of identities given in \eqref{Expectation of Poly}. We also need to prove some elementary inequalities involving the logarithm of the polynomials
given inside the expectations in \eqref{Expectation of Poly}. 

First, we extend the identities given in \eqref{Expectation of Poly} to all $n$. 
\begin{lem}\label{Prop:3.2}%\phanuel{This is Theorem/Proof okay - PM}
If $X$ is a $\mathbb{P}^{1}\left(\mathbb{R}\right)$-valued random variable satisfying \eqref{Invariant1}, then
\begin{equation}
\mathbb{E}\left[\log X\right]=\frac{1}{(n+6)2^{n}}\mathbb{E}\left[\log\left(\prod_{k=1}^{2^{n}}\left(a_{n}^{k}X+b_{n}^{k}\right)\right)\right]\label{eq:9-1}
\end{equation}
 for all $n\in \mathbb{Z}_{\geq 0}$.
\end{lem}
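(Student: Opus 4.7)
The plan is to prove the identity by induction on $n$, with the base case $n=0$ being precisely the identity $\mathbb{E}[\log X] = \frac{1}{6}\mathbb{E}[\log(2X+1)]$ that comes from Lemma \ref{prop:p_identity} with $p=\frac{1}{2}$ (since $a_0^1=2$, $b_0^1=1$, and $(0+6)2^0=6$).

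For the inductive step, assume the identity for $n$ and apply the distributional identity $X \sim \tfrac{1}{X}+\epsilon$ with $\epsilon \sim \mathrm{Bernoulli}(\tfrac{1}{2})$ independent of $X$. Substituting inside each factor gives
\[
a_n^k X + b_n^k \;\sim\; a_n^k\!\left(\tfrac{1}{X}+\epsilon\right) + b_n^k \;=\; \frac{a_n^k + (a_n^k \epsilon + b_n^k)X}{X}.
\]
Taking the log of the product over $k$, the $2^n$ copies of $\log X$ in the denominator combine into a term $-2^n \log X$. Moving this to the left side (using the inductive hypothesis) yields
\[
(n+7)2^n \,\mathbb{E}[\log X] \;=\; \mathbb{E}\!\left[\log \prod_{k=1}^{2^n}\bigl(a_n^k + (a_n^k \epsilon + b_n^k)X\bigr)\right].
\]
Conditioning on $\epsilon\in\{0,1\}$ and using independence of $\epsilon$ and $X$, the right side splits as
\[
\tfrac{1}{2}\,\mathbb{E}\!\left[\log \prod_{k=1}^{2^n}\bigl(a_n^k + b_n^k X\bigr)\right] + \tfrac{1}{2}\,\mathbb{E}\!\left[\log \prod_{k=1}^{2^n}\bigl(a_n^k + (a_n^k + b_n^k) X\bigr)\right].
\]

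Multiplying through by $2$ transforms the prefactor on the left into $(n+7)\,2^{n+1} = ((n+1)+6)\,2^{n+1}$, which is exactly what is required for the statement at level $n+1$. It remains to verify that the combined product
\[
\prod_{k=1}^{2^n}\bigl(a_n^k + b_n^k X\bigr)\,\bigl(a_n^k + (a_n^k+b_n^k) X\bigr)
\]
matches $\prod_{k=1}^{2^{n+1}}\bigl(a_{n+1}^k X + b_{n+1}^k\bigr)$. This is a direct bookkeeping check against Definition \ref{def:recursion}: the first $2^n$ factors $(a_n^k+b_n^k)X + a_n^k$ correspond to $k=1,\dots,2^n$ via $(a_{n+1}^k,b_{n+1}^k)=(a_n^k+b_n^k,a_n^k)$, while the remaining $2^n$ factors $b_n^k X + a_n^k$ correspond to $k=2^n+1,\dots,2^{n+1}$ via $(a_{n+1}^k,b_{n+1}^k)=(b_n^{k-2^n},a_n^{k-2^n})$.

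The main obstacle is conceptual rather than technical: one must recognize that the two halves of Definition \ref{def:recursion} are precisely encoding the two outcomes of $\epsilon$ in the distributional identity (the first encodes $\epsilon=1$ and the second encodes $\epsilon=0$ after a harmless swap), so that a single application of the substitution advances the recursion by exactly one level. Once this correspondence is in hand, the induction proceeds by assembling terms and no nontrivial estimates are required.
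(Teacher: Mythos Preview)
Your proof is correct and follows essentially the same approach as the paper: induction on $n$, with the base case supplied by Lemma~\ref{prop:p_identity} at $p=\tfrac12$, and the inductive step carried out by substituting $X\sim\tfrac1X+\epsilon$, separating the $-2^n\log X$ contribution, and identifying the resulting two families of factors with the two halves of Definition~\ref{def:recursion}. The only cosmetic difference is the order of operations---the paper conditions on $\epsilon$ before extracting the $\log X$ terms, whereas you keep $\epsilon$ inside until after the extraction---but the algebra and the final identification are the same.
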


\begin{proof}
We begin with $n=0$. By Lemma \ref{prop:p_identity} with $p=\frac{1}{2}$ we have,
\begin{align*}
\mathbb{E}\left[\log X\right] & =\frac{1}{6}\mathbb{E}\left[\log\left(2X+1\right)\right]\\
& =\frac{1}{(0+6)2^{0}}\mathbb{E}\left[\log\left(a_{0}^{1}X+b_{0}^{1}\right)\right].
\end{align*}
Now suppose \eqref{eq:9-1} holds for $n$. We shall show that \eqref{eq:9-1} holds for $n+1$. Note that
\begin{align}
\mathbb{E}\left[\log X\right] & =\frac{1}{(n+6)2^{n}}\mathbb{E}\left[\log\left(\prod_{k=1}^{2^{n}}\left(a_{n}^{k}X+b_{n}^{k}\right)\right)\right]\nonumber \\
 & =\frac{1}{(n+6)2^{n}}\left(\frac{1}{2}\mathbb{E}\left[\log\left(\prod_{k=1}^{2^{n}}\left(a_{n}^{k}\left(\frac{1}{X}+1\right)+b_{n}^{k}\right)\right)\right]+\frac{1}{2}\mathbb{E}\left[\log\left(\prod_{k=1}^{2^{n}}\left(a_{n}^{k}\left(\frac{1}{X}\right)+b_{n}^{k}\right)\right)\right]\right)\nonumber \\
 & =\frac{1}{(n+6)2^{n+1}}\left(\mathbb{E}\left[\log\left(\prod_{k=1}^{2^{n}}\left(\frac{a_{n}^{k}}{X}+a_{n}^{k}+b_{n}^{k}\right)\right)\right]+\mathbb{E}\left[\log\left(\prod_{k=1}^{2^{n}}\left(\frac{a_{n}^{k}}{X}+b_{n}^{k}\right)\right)\right]\right)\nonumber \\
 & =\frac{1}{(n+6)2^{n+1}}\left(\mathbb{E}\left[\log\left(\prod_{k=1}^{2^{n}}\left(\frac{a_{n}^{k}+\left(a_{n}^{k}+b_{n}^{k}\right)X}{X}\right)\right)\right]+\mathbb{E}\left[\log\left(\prod_{k=1}^{2^{n}}\left(\frac{a_{n}^{k}+b_{n}^{k}X}{X}\right)\right)\right]\right)\nonumber \\
 & =\frac{1}{(n+6)2^{n+1}}\mathbb{E}\left[\log\left(\prod_{k=1}^{2^{n}}\Big(a_{n}^{k}+\left(a_{n}^{k}+b_{n}^{k}\right)X\Big)\prod_{k=1}^{2^{n}}\left(a_{n}^{k}+b_{n}^{k}X\right)\right)\right]-\frac{\mathbb{E}\left[\log X\right]}{(n+6)}.\label{eq:10}
 %& =\frac{1}{(n+6)2^{n+1}}\left(\mathbb{E}\left[\log\left(\prod_{k=1}^{2^{n}}\Big(\left(b_{n+1}^{k}+\left(a_{n+1}^{k}\right)X\right)\left(a_{n}^{k}+b_{n}^{k}X\right)\Big)\right)\right]\right)-\frac{\mathbb{E}\left[\log X\right]}{(n+6)}.\label{eq:10}
\end{align}
Moving the last term on the right-hand side of \eqref{eq:10} to the left leads to
\begin{align*}
\mathbb{E}\left[\log X\right] & =\frac{1}{\big((n+1)+6\big)2^{n+1}}\mathbb{E}\left[\log\left(\prod_{k=1}^{2^{n+1}}\left(a_{n+1}^{k}X+b_{n+1}^{k}\right)\right)\right].
\end{align*}
Here we have combined and simplified the products appearing in \eqref{eq:10} by using the recursion from Definition \ref{def:recursion}. The result now follows by induction.
\end{proof}

We now prove the elementary inequalities needed to estimate \eqref{eq:9-1}. 
\begin{lem}%\phanuel{This is Theorem/Proof okay - PM}
Let $n\in \mathbb{Z}_{\geq 0}$. For $x\geq 1$,
\begin{equation}\label{lem1}
\log\left(\prod_{k=1}^{2^{n}}\left(a_{n}^{k}x+b_{n}^{k}\right)\right)\leq\log\left(x^{2^{n}}\prod_{k=1}^{2^{n}}\left(a_{n}^{k}+b_{n}^{k}\right)\right).
\end{equation}
Conversely, when $0<x\leq 1$,
\begin{equation}\label{lem2}
\log\left(\prod_{k=1}^{2^{n}}\left(a_{n}^{k}x+b_{n}^{k}\right)\right)\geq\log\left(x^{2^{n}}\prod_{k=1}^{2^{n}}\left(a_{n}^{k}+b_{n}^{k}\right)\right).
\end{equation}
\end{lem}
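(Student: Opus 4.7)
The plan is very direct: reduce both inequalities to a pointwise comparison of each individual factor and then multiply. Since $\log$ is monotone increasing, \eqref{lem1} and \eqref{lem2} are equivalent to
\[
\prod_{k=1}^{2^{n}}\left(a_{n}^{k}x+b_{n}^{k}\right) \leq x^{2^{n}}\prod_{k=1}^{2^{n}}\left(a_{n}^{k}+b_{n}^{k}\right)
\]
for $x\geq 1$ (with the reverse inequality for $0<x\leq 1$), and the right-hand side can be written as $\prod_{k=1}^{2^{n}}\bigl(a_{n}^{k}x+b_{n}^{k}x\bigr)$. So it suffices to show that for each individual index $k$,
\[
a_{n}^{k}x+b_{n}^{k} \leq a_{n}^{k}x+b_{n}^{k}x \qquad \text{when } x\geq 1,
\]
with the reverse inequality when $0<x\leq 1$. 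After subtracting $a_n^k x$ from both sides, this reduces to $b_n^k \leq b_n^k x$, which is immediate once one knows $b_n^k > 0$.

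The only thing to verify, then, is that $a_n^k > 0$ and $b_n^k > 0$ for all admissible $n$ and $k$. This is a trivial induction from Definition \ref{def:recursion}: the base case $a_0^1=2,\, b_0^1=1$ is positive, and the recursion $(a_{n+1}^k,b_{n+1}^k)=(a_n^k+b_n^k,a_n^k)$ for $k\leq 2^n$ and $(a_{n+1}^k,b_{n+1}^k)=(b_n^{k-2^n},a_n^{k-2^n})$ for $k>2^n$ clearly preserves positivity.

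With the pointwise bound established for each factor, one multiplies the $2^n$ inequalities (all sides are positive), obtains the product form above, and then takes logarithms to recover \eqref{lem1} and \eqref{lem2}. There is no real obstacle here; the key observation is simply that pulling an extra factor of $x$ through each term of the product corresponds exactly to replacing $b_n^k$ with $b_n^k x$, and the sign of $x-1$ dictates which direction the per-factor inequality goes.
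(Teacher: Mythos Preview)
Your proof is correct and follows essentially the same approach as the paper: compare each factor $a_n^k x + b_n^k$ with $x(a_n^k+b_n^k)$, take the product, and apply the logarithm. The paper's proof is terser and leaves the positivity of the coefficients implicit, whereas you make it explicit via an easy induction; otherwise the arguments are identical.
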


\begin{proof}
Note that when $x\geq 1$, we have $a^k_n x+b^k_n\leq x (a^k_n+b^k_n)$. Taking products and the $\log$ of both sides gives us the desired result. The proof of the $0<x\leq 1$ case follows similarly.
\end{proof} 

%\begin{lem}\label{lem2}%\phanuel{This is Theorem/Proof is okay - PM}
%For $x<1$,
%\[
%\log\left(\prod_{k=1}^{2^{n}}\left(a_{n}^{k}x+b_{n}^{k}\right)\right)>\log\left(x^{2^{n}}\prod_{k=1}^{2^{n}}\left(a_{n}^{k}+b_{n}^{k}\right)\right).
%\]
%\end{lem}
%\begin{proof}
%Note that when $x<1$, we have $a_{n}^{k}x+b_{n}^{k}> x \left(a_{n}^{k}+b_{n}^{k}\right)$. 
%Taking products and the $\log$ of both sides gives us the desired result. 
%\end{proof}
Using \eqref{lem1} and \eqref{lem2}, we can prove that the Lyapunov exponent
is bounded by terms dependent only on $n$. First, we define the following quantities that appear as the rightmost entries of \eqref{an,bn array}. 

\begin{defn}\label{def:c_n}
For each $n\in \mathbb{Z}_{\geq 0}$, let $c_{n}$ be the product of the sums of coefficient
pairs in row $n$ of \eqref{an,bn array}. That is, 
\[
c_{n}=\prod_{k=1}^{2^{n}}\left(a_{n}^{k}+b_{n}^{k}\right).
\]
\end{defn}
\noindent For example, $c_0,\dots, c_3$ are displayed in \eqref{an,bn array}. We remark that the recursion from Definition \ref{def:recursion} implies
\begin{equation}\label{eq:cn_rec}
c_n=c_{n-1}\prod_{k=1}^{2^{n-1}}\left(a_{n}^{k}+b_{n}^{k}\right)=\prod_{k=1}^{2^{n}}a_{n+1}^{k}.
\end{equation}

Now we can state our main result of this section.
\begin{thm}\label{thm:bounds}%\phanuel{This is Theorem/Proof okay - PM}
Let $\mu$ be the probability measure on $\mathrm{GL}(2,\mathbb R)$ given by \eqref{Ber(1/2) - Intro}. Then for each $n\in \mathbb{Z}_{\geq 0}$, the Lyapunov exponent $\lambda$ associated with $\mu$ can be estimated by 
\begin{equation}\label{Bounds}
p_n\leq\lambda\leq q_n,
\end{equation}
where 
\begin{equation}\label{ConvergingSequence}
p_n =\frac{\log c_{n}}{\left(n+7\right)2^{n}} \,\,\, \mbox{and}\,\,\, q_n = \frac{\log c_{n}}{\left(n+4\right)2^{n}}.
\end{equation}
Moreover, 
\[
\lim_{n\to \infty}p_{n} = \lim_{n\to\infty} q_{n} = \lambda.
\]
\end{thm}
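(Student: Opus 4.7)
The plan is to combine Lemma \ref{Prop:3.2} (which rewrites $\lambda = \mathbb{E}[\log X]$ as $(n+6)^{-1}2^{-n}\mathbb{E}[\log \prod_k (a_n^k X + b_n^k)]$) with the pointwise bounds \eqref{lem1}, \eqref{lem2} and the two conditional identities \eqref{eq:a}, \eqref{eq:b} from Lemma \ref{Expectation-Identity1} specialized to $p=1/2$. Since $\nu$ is atomless (Theorem \ref{furstenberg}), $\mathbb{P}(X=1)=0$, so splitting the expectation along $\{X>1\}$ and $\{X<1\}$ is safe.

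For the upper bound, I would apply \eqref{lem1} on $\{X>1\}$ and the trivial bound $a_n^k X + b_n^k \le a_n^k + b_n^k$ on $\{X<1\}$; adding, the log of the product is at most $\log c_n + 2^n (\log X)\mathbf{1}_{X>1}$. Taking expectations and invoking \eqref{eq:a} (which reads $\mathbb{E}[(\log X)\mathbf{1}_{X>1}]=2\lambda$), one gets
\[
\lambda \le \frac{\log c_n}{(n+6)2^n} + \frac{2\lambda}{n+6},
\]
and solving for $\lambda$ yields $\lambda\le \log c_n/((n+4)2^n)=q_n$. The lower bound is entirely symmetric: apply \eqref{lem2} on $\{X<1\}$, use the trivial lower bound $a_n^kX+b_n^k\ge a_n^k+b_n^k$ on $\{X>1\}$, and appeal to \eqref{eq:b} (which gives $\mathbb{E}[(\log X)\mathbf{1}_{X<1}]=-\lambda$) to obtain
\[
\lambda \ge \frac{\log c_n}{(n+6)2^n} - \frac{\lambda}{n+6},
\]
rearranging to $\lambda \ge \log c_n/((n+7)2^n)=p_n$.

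For the convergence claim, the squeeze $p_n\le\lambda\le q_n$ already confines the two sequences to a shrinking window. A direct computation gives
\[
q_n - p_n = \frac{\log c_n}{2^n}\left(\frac{1}{n+4}-\frac{1}{n+7}\right) = \frac{3\log c_n}{(n+4)(n+7)2^n},
\]
and since $\log c_n/((n+7)2^n)=p_n\le\lambda$, we conclude $q_n-p_n\le 3\lambda/(n+4)\to 0$, so both sequences tend to $\lambda$.

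There is no real obstacle in the argument once one sees that the ``mixed'' pointwise bounds (sharp on one half-line, trivial on the other) conspire with the two identities in Lemma \ref{Expectation-Identity1} to produce a closed linear inequality for $\lambda$ with coefficients depending only on $n$. The only subtlety worth flagging explicitly is that $\mathbb{P}(X=1)=0$ so the indicator split is exhaustive, and that $0<\lambda<\infty$ (Lemma \ref{prop:p_identity}) legitimates the division by $1-2/(n+6)$ and the final estimate of $q_n-p_n$ in terms of $\lambda$.
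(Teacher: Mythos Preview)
Your derivation of the bounds $p_n\le\lambda\le q_n$ matches the paper's exactly: the same split along $\{X<1\}\cup\{X>1\}$ (legitimate since $\nu$ is atomless), the same pairing of \eqref{lem1}/\eqref{lem2} with the trivial monotonicity bound on the complementary half-line, and the same use of \eqref{eq:a}/\eqref{eq:b} at $p=\tfrac12$ to close up the linear inequality in $\lambda$.

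Where you diverge is in proving $q_n-p_n\to 0$. The paper bounds $c_n$ from above by $(F_{n+4})^{2^n}$ via \eqref{eq:dominate} and \eqref{eq:cn_rec}, then uses the asymptotic $F_n\sim\phi_1^n/\sqrt{5}$ to see that $\tfrac{\log c_n}{(n+4)2^n}$ stays bounded, so the extra factor $\tfrac{3}{n+7}$ drives the difference to zero. Your route is shorter: you simply recycle the already-established lower bound $p_n\le\lambda$ to get $\log c_n/2^n\le (n+7)\lambda$, whence $q_n-p_n\le 3\lambda/(n+4)\to 0$. This avoids the Fibonacci estimate entirely and uses nothing beyond what the proof itself has produced. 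The paper's argument, by contrast, yields the side benefit of an a~priori bound $\limsup_n q_n\le\log\phi_1$ that does not presuppose $\lambda<\infty$; yours is cleaner but relies on that finiteness (which, as you note, is supplied by Lemma~\ref{prop:p_identity}).
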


\begin{proof}
Fix $n\in \mathbb{N}\cup \left\lbrace 0 \right\rbrace$ and let $X$ be a $\mathbb{P}^{1}\left(\mathbb{R}\right)$-valued random variable satisfying \eqref{Invariant1}. Since the distribution of $X$ is atomless, we can use Lemma \ref{Prop:3.2} and \eqref{lem1} to write
\begin{align}
\mathbb{E}\left[\log X\right] & =\frac{1}{(n+6)2^{n}}\left(\mathbb{E}\left[\log\left(\prod_{k=1}^{2^{n}}\left(a_{n}^{k}X+b_{n}^{k}\right)\right)\cdot\mathbf{1}_{\left(X<1\right)}\right]+\mathbb{E}\left[\log\left(\prod_{k=1}^{2^{n}}\left(a_{n}^{k}X+b_{n}^{k}\right)\right)\cdot\mathbf{1}_{\left(X>1\right)}\right]\right)\nonumber \\
 & \leq\frac{1}{(n+6)2^{n}}\left(\mathbb{E}\left[\log\left(\prod_{k=1}^{2^{n}}\left(a_{n}^{k}+b_{n}^{k}\right)\right)\cdot\mathbf{1}_{\left(X<1\right)}\right]+\mathbb{E}\left[\log\left(X^{2^{n}}\prod_{k=1}^{2^{n}}\left(a_{n}^{k}+b_{n}^{k}\right)\right)\cdot\mathbf{1}_{\left(X>1\right)}\right]\right).\nonumber
\end{align}
Moreover, using \eqref{eq:cn_rec} and \eqref{eq:a} from Lemma \ref{Expectation-Identity1}, it follows that
\begin{align}
\mathbb{E}\left[\log X\right] & \leq\frac{1}{(n+6)2^{n}}\Big(\log(c_n)\cdot\mathbb{P}(X<1)+2^n\mathbb{E}\left[\log\left(X\right)\cdot\mathbf{1}_{\left(X>1\right)}\right]+\log(c_n)\cdot\mathbb{P}(X>1)\Big)\nonumber \\
% & =\frac{1}{(n+6)2^{n}}\left(\log\left(\prod_{k=1}^{2^{n}}\left(a_{n}^{k}+b_{n}^{k}\right)\right)\right)+\frac{2^{n}}{(n+6)2^{n}}\mathbb{E}\left[\log\left(X\right)\cdot\mathbf{1}_{\left(X>1\right)}\right]\nonumber \\
 & =\frac{\log c_{n}}{(n+6)2^{n}}+\frac{2\mathbb{E}\left[\log X\right]}{n+6}.\label{eq:12}
\end{align}
Subtracting the last term on the right-hand side of \eqref{eq:12} from both sides while recalling \eqref{eq:lambda_log} leads to
\[
\lambda=\mathbb{E}\left[\log X\right]\leq\frac{\log c_{n} }{\left(n+4\right)2^{n}}.
\]

For the lower bound, we can repeat this same procedure using \eqref{lem2} and \eqref{eq:b} instead of \eqref{lem1} and \eqref{eq:a} to arrive at

\[
\mathbb{E}\left[\log X\right]\geq \frac{\log c_{n}}{(n+6)2^{n}}-\frac{\mathbb{E}\left[\log X\right]}{n+6}.
\]
Similarly, this implies
\[
\frac{\log c_{n}}{(n+7)2^{n}}\leq\lambda.
\]

We now show that these bounds converge to the Lyapunov exponent as $n\to\infty$. We first point out the crude estimate $c_{n}\leq \left(F_{n+4}\right)^{2^{n}}$ where
$\{F_n\}:= \{f_{n}(0,1) \}$ is the usual Fibonacci sequence. This follows from \eqref{eq:cn_rec}, \eqref{eq:dominate}, and the fact that $a_n^1=F_{n+3}$ for all $n\geq 0$. Also note that the well-known asymptotic 
\[
F_n\sim\frac{(\phi_1)^n}{\sqrt{5}}~\text{ as }~n\to\infty
\]
implies
\[
\lim_{n\to\infty}\frac{\log\left(\left(F_{n+4}\right)^{2^{n}}\right)}{(n+4)2^{n}}=\log\left(\phi_1\right).
%\begin{split}
%\lim_{n\to\infty}\frac{\log\left(\left(F_{n+4}\right)^{2^{n}}\right)}{(n+4)2^{n}} & =\lim_{n\to\infty}\frac{\log\left(F_{n+4}\right)}{(n+4)}\\
 %& =\lim_{n\to\infty}\frac{1}{n+4}\log\left(\frac{\left(\phi_1\right)^{n+4}}{\sqrt{5}}-\frac{\left(\phi_2\right)^{n+4}}{\sqrt{5}}\right)\\
% & =\log\left(\phi_1\right).
%\end{split}
\]
Hence we have
\begin{align}
\limsup_{n\to\infty}\left|q_{n}-p_{n}\right| 
 & =\limsup_{n\to\infty}\frac{3\log c_{n}}{\left(n+7\right)\left(n+4\right)2^{n}}\nonumber \\
 & \leq \lim_{n\to\infty}\frac{3\log\left(\left(F_{n+4}\right)^{2^{n}}\right)}{(n+7)\left(n+4\right)2^{n}}\nonumber \\
 & =0.\nonumber
\end{align}
Now the result follows from \eqref{Bounds}.

\end{proof}

We end this section with the following two remarks. 

\begin{remark}
There doesn't seem to be an obvious recursion among the $c_{n}$
values. In order to compute $c_{n}$ using its definition, we must consider
$2^{n}$ coefficient pairs. We are able to compute ${p_{25}\approx 0.204266}$ and ${q_{25}\approx 0.225397}$ but going beyond $n=25$ exceeds our computing power. After implementing a simple numerical scheme to compute $\mathbb{E}\left[\log X\right]$ using the CDF of $X$ from Theorem 5.2 of \cite{Goswami2004} along with \eqref{eq:b}, we expect that $\lambda\approx 0.2165$.

\end{remark}

\begin{remark}
The bounds in Lemma \ref{UBprop} from the general $p$ case are analogous to $p_0$ and $q_0$ from \eqref{ConvergingSequence} of the $\Bernoulli\left(\frac{1}{2}\right)$ model. While we can attempt to improve these bounds by mimicking the proof of Theorem \ref{thm:bounds}, unlike in that case, there doesn't appear to be a nice expression for the corresponding bounds $p_n$ and $q_n$ as $n$ gets larger. 
\end{remark}

% VARIANCE
%We simulate, using our  simulation method from Section \ref{SimulateVarMethod} , $\Var\left(L_p\right)$ with $n=1000$ and $m=1\,000\,000$. We plot the resulting points in Figure \ref{fig:BernPVars}. 
%
%\begin{figure}[H]
%\centering
%\includegraphics[scale=0.6]{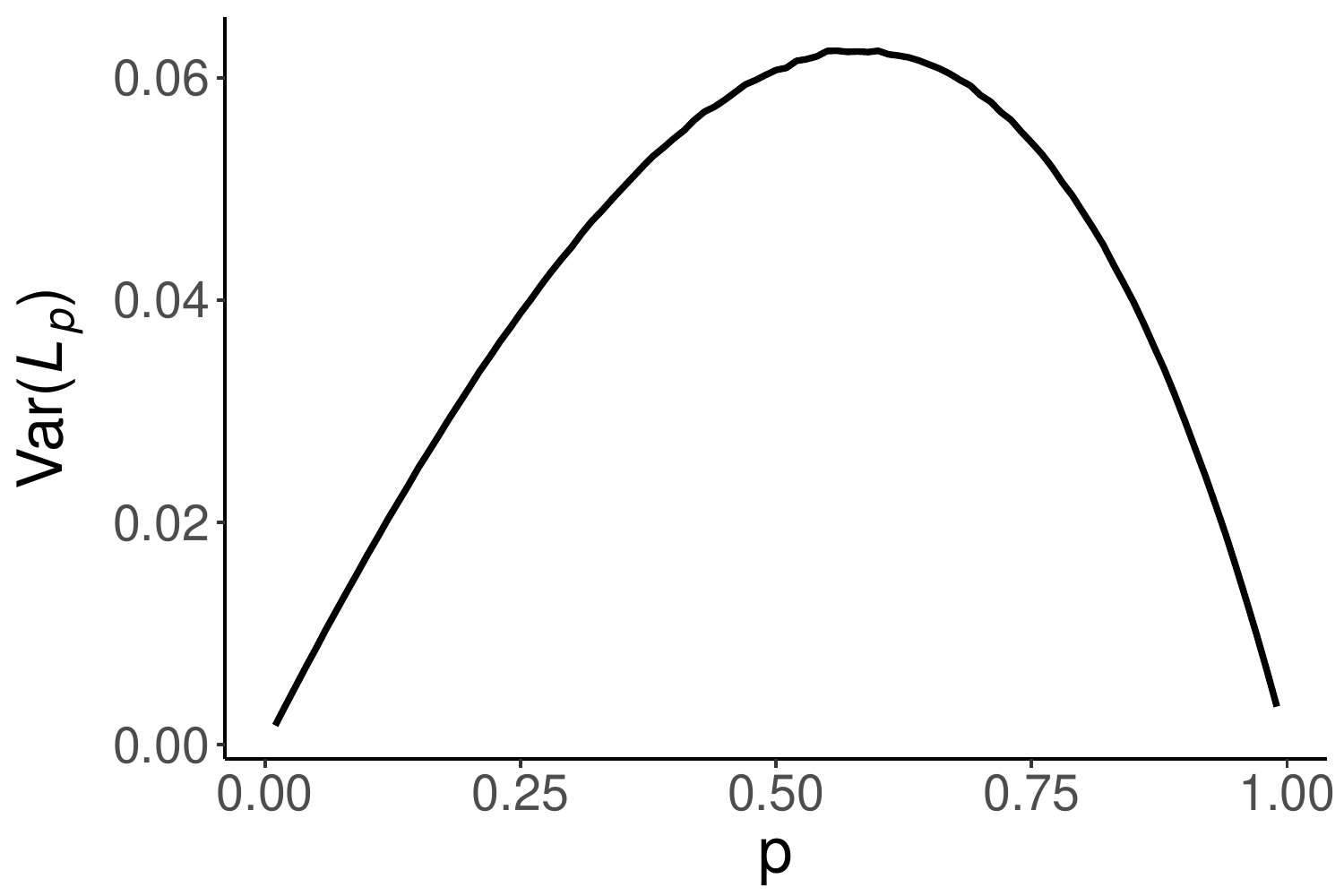}
%\caption{}
%\label{fig:BernPVars}
%\end{figure}
%
%With $p=0$ and $p=1$, the variance is zero. The distribution of variances is moderately skewed to the left.  The maximum variance is achieved at $p=0.56$.
%
%We point out that the method in Section ref{bernhalf} can be repeated to the  $\Bernoulli\left(p\right)$ model in this section. The method would also produce a sequence of upper and lower bounds that would converge to $\lambda (p)$. 

%(skewness = -0.5977).

%%%%%%%%%%%%%%%%%%%%%%%%%%%%%%%%%%%%% Cauchy parameter

\section{$\mathbf{\xi} \cdot \Cauchy$ Parameter Model}\label{xicauchy}
The parameter model studied in this section is based on the standard Cauchy distribution (that is, Cauchy with location $x_0 = 0$ and scale $\gamma = 1$). Recall that the probability density function of a $\Cauchy \left(x_0,\gamma\right)$ random variable with location $x_0\in\mathbb{R}$ and scale $\gamma>0$ is
\begin{equation}\label{eq:Cauchy_dens}
f(x)=\frac{1}{\pi \gamma \left(1+\left(\frac{x-x_0}{\gamma}\right)^2\right)},\ -\infty < x < \infty.
\end{equation}

Let $\mu_\xi$ be the probability measure on $\mathrm{GL}(2,\mathbb R)$ given by 
\begin{equation}\label{CauchyMatrix1}
\left(\begin{array}{cc}
\xi \epsilon & -1\\
1 & 0
\end{array}\right), \ \epsilon\sim \Cauchy \left(0,1\right), \ \xi\in\mathbb{\mathbb{R}}, \ \xi\neq 0.
\end{equation}
The fact that $\mu_\xi$ satisfies the hypotheses of Theorem \ref{furstenberg} can be seen through a similar analysis as done in the beginning of Section \ref{bernp} with some slight differences which we now point out. To verify hypothesis $(i)$, we can use the Frobenius matrix norm to arrive at $\mathbb E[\log^+\|Y_1\|]=\frac{1}{2}\int\log(2+\xi^{2}x^{2})f(x)dx$ where $f(x)$ is the density for $\mbox{Cauchy}\left(0,1\right)$. By elementary computations, this integral is seen to be finite for all $\xi$. Hypothesis $(ii)$ can be verified in the same manner as for the $\Bernoulli\left(p\right)$ model. Hypothesis $(iii)$ follows from the unbounded support of $\epsilon$. For hypothesis $(iv)$, we can again use the equivalent condition given in \cite[Proposition II.4.3]{bougerol}. More specifically, draw $M$ from \eqref{CauchyMatrix1} with $\epsilon=\frac{1}{\xi}$ and proceed as in the beginning of Section \ref{bernp}. 

Hence we know there exists a unique $\mu_\xi$-invariant distribution $\nu_\xi$ such that a random variable $X_\xi$ has law $\nu_\xi$ if and only if it satisfies the distributional identity 
\begin{equation}\label{Cauchy-distributional}
 X_\xi\sim-\frac{1}{X_\xi}+\xi\epsilon,
\end{equation}
where $\epsilon\sim \Cauchy \left(0,1\right)$ and is independent of $X_\xi$. The goal of this section is to find the explicit value of the Lyapunov exponent $\lambda (\xi)$ related to $\mu_\xi$. Following the method from \cite[pp. 35]{bougerol}, we have an explicit formula for the Lyapunov exponent in terms of the parameter $\xi$. This will allow us to to study the variance in the Central Limit Theorem related to the products of random matrices of the form \eqref{CauchyMatrix1} as formulated in Theorem \ref{lepageclt}. Since the Lyapunov exponent used in our Monte Carlo simulation scheme will be exact, we can obtain a better approximation for the variance compared to the other models we study.  

\begin{prop}\label{CauchyProp}
Let $\mu_\xi$ be the probability measure on $\mathrm{GL}(2,\mathbb R)$ given by \eqref{CauchyMatrix1}. Then the Lyapunov exponent $\lambda(\xi)$ associated with $\mu_\xi$ is given by
\[
\lambda(\xi)=\log\left(\frac{|\xi|+\sqrt{\xi^{2}+4}}{2}\right).
\]
\end{prop}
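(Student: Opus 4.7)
The plan is to exploit the stability properties of the Cauchy distribution to identify the unique $\mu_\xi$-invariant distribution $\nu_\xi$ explicitly, and then compute $\lambda(\xi) = \mathbb{E}[\log|X_\xi|]$ directly from the density. The uniqueness granted by Theorem \ref{furstenberg} (which is established in the paragraphs preceding the proposition) means that any $X_\xi$ satisfying \eqref{Cauchy-distributional} must have law $\nu_\xi$, so guessing a candidate that works will suffice.

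The key observations I would invoke are the following stability properties of the centered Cauchy family: if $Z \sim \Cauchy(0,\gamma)$, then $1/Z \sim \Cauchy(0,1/\gamma)$ and $-Z \sim \Cauchy(0,\gamma)$, and if $Z_1 \sim \Cauchy(0,\gamma_1)$ and $Z_2 \sim \Cauchy(0,\gamma_2)$ are independent, then $Z_1 + Z_2 \sim \Cauchy(0,\gamma_1+\gamma_2)$. Moreover, $\xi\epsilon \sim \Cauchy(0,|\xi|)$ when $\epsilon \sim \Cauchy(0,1)$. With these in hand, I would postulate $X_\xi \sim \Cauchy(0,\gamma)$ for some $\gamma>0$ to be determined. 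Then the right-hand side of \eqref{Cauchy-distributional} is distributed as a sum of two independent centered Cauchy variables, namely $\Cauchy(0,1/\gamma) + \Cauchy(0,|\xi|) = \Cauchy(0, 1/\gamma + |\xi|)$. Matching scales forces
\[
\gamma = \frac{1}{\gamma}+|\xi|, \qquad \text{i.e.,} \qquad \gamma^2 - |\xi|\gamma - 1 = 0,
\]
whose unique positive root is $\gamma = \tfrac{1}{2}(|\xi|+\sqrt{\xi^2+4})$. By uniqueness of $\nu_\xi$, this is indeed the invariant distribution.

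Next I would apply formula \eqref{expectation} with $c=1$, $d=0$ to write $\lambda(\xi) = \mathbb{E}[\log|X_\xi|]$. Using the density \eqref{eq:Cauchy_dens} with location $0$ and scale $\gamma$, and making the substitution $u = x/\gamma$, this becomes
\[
\lambda(\xi) = \log\gamma + \int_{-\infty}^{\infty}\frac{\log|u|}{\pi(1+u^2)}\,du.
\]
The remaining integral vanishes: by the substitution $v = 1/u$ on $(0,\infty)$ one checks that $\int_0^\infty \frac{\log u}{\pi(1+u^2)}\,du = -\int_0^\infty \frac{\log u}{\pi(1+u^2)}\,du$, hence equals $0$, and the integrand is even. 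Therefore $\lambda(\xi) = \log\gamma$, which is the claimed expression.

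The only conceptual step is guessing the Cauchy form for $\nu_\xi$; once that is in place, the rest is bookkeeping with the Cauchy stability formulas and a single elementary integral. The main technical obstacle is making sure one appeals cleanly to uniqueness, since otherwise one would have to argue that any $X_\xi$ satisfying \eqref{Cauchy-distributional} is Cauchy; but Theorem \ref{furstenberg}, already verified for $\mu_\xi$ in the paragraph preceding the proposition, short-circuits this worry.
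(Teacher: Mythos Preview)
Your proposal is correct and follows essentially the same approach as the paper: guess a centered Cauchy law for $X_\xi$, use the transformation properties of the Cauchy distribution to match scales and solve $\gamma=1/\gamma+|\xi|$, then compute $\mathbb{E}[\log|X_\xi|]=\log\gamma$ and invoke uniqueness of $\nu_\xi$. The only cosmetic difference is that you spell out the vanishing of $\int_{-\infty}^\infty\frac{\log|u|}{\pi(1+u^2)}\,du$ via the substitution $v=1/u$, whereas the paper simply asserts the value of the integral.
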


\begin{proof} 
According to \eqref{expectation}, we have $\lambda (\xi)=\mathbb{E}\big[\log |X_\xi|\big]$, where $X_\xi$ is a random variable satisfying \eqref{Cauchy-distributional}. To find the law of such an $X_\xi$, we first guess that it is $\Cauchy \left(0,\gamma\right)$ for some $\gamma>0$ and then verify that it satisfies \eqref{Cauchy-distributional} for a particular $\gamma$.

Assuming that $X_\xi\sim\Cauchy \left(0,\gamma\right)$, the well-known transformation properties of the Cauchy distribution imply that the right-hand side of \eqref{Cauchy-distributional} is also Cauchy distributed, namely 
$$-\frac{1}{X_\xi}+\xi\epsilon \sim \Cauchy\left(0,\frac{1}{\gamma}+\left|\xi\right|\right).$$
Hence \eqref{Cauchy-distributional} holds if and only if
\[
\gamma=\frac{1}{\gamma}+\left|\xi\right|
\]
which has as its unique positive solution
\[
\gamma=\frac{|\xi|+\sqrt{\xi^2+4}}{2}.
\]
%Since the left-hand side of \eqref{Cauchy3} also has a Cauchy distribution, it suffices to find which parameter $a$ \eqref{Cauchy3} holds for. Taking the characteristic function of each side (recall that $\varphi_{X}(\lambda)=e^{-a|\lambda|}$) and using the independence of $X_\xi$ and $\epsilon$,
%\[
%e^{-a|\lambda|}=e^{\frac{-1}{a}|\lambda|}e^{-\xi|\lambda|}.
%\]
%Clearly, $a=\frac{1}{a}+\xi$, which implies that $a^{2}-\xi a-1=0$. Hence,
%\[
%a=\frac{\xi+\sqrt{\xi^2+4}}{2}.
%\]
Now we can use \eqref{eq:Cauchy_dens} to write
\begin{align*}
\lambda(\xi)=\int_{-\infty}^{\infty}\log|x|\frac{1}{\pi \gamma\left(1+\left(\frac{x}{\gamma}\right)^2\right)}\,\mathrm d x&=\log(\gamma) \\
&=\log\left(\frac{|\xi|+\sqrt{\xi^{2}+4}}{2}\right).
\end{align*}
The proof is complete because of the uniqueness of the distribution $v_\xi$ such that \eqref{Cauchy-distributional} is satisfied. 
\end{proof}

Figure \ref{fig:Cauchy20Lambda} shows the graph of $\lambda(\xi)$ for $\xi\in[-20,20]$; 
in Figure \ref{fig:CauchyZoomLambda}, we plot $\lambda(\xi)$ for $\xi\in[-1,1].$

\begin{figure}[H]
\centering
\begin{subfigure}{.5\textwidth}
  \centering
\includegraphics[scale=0.5]{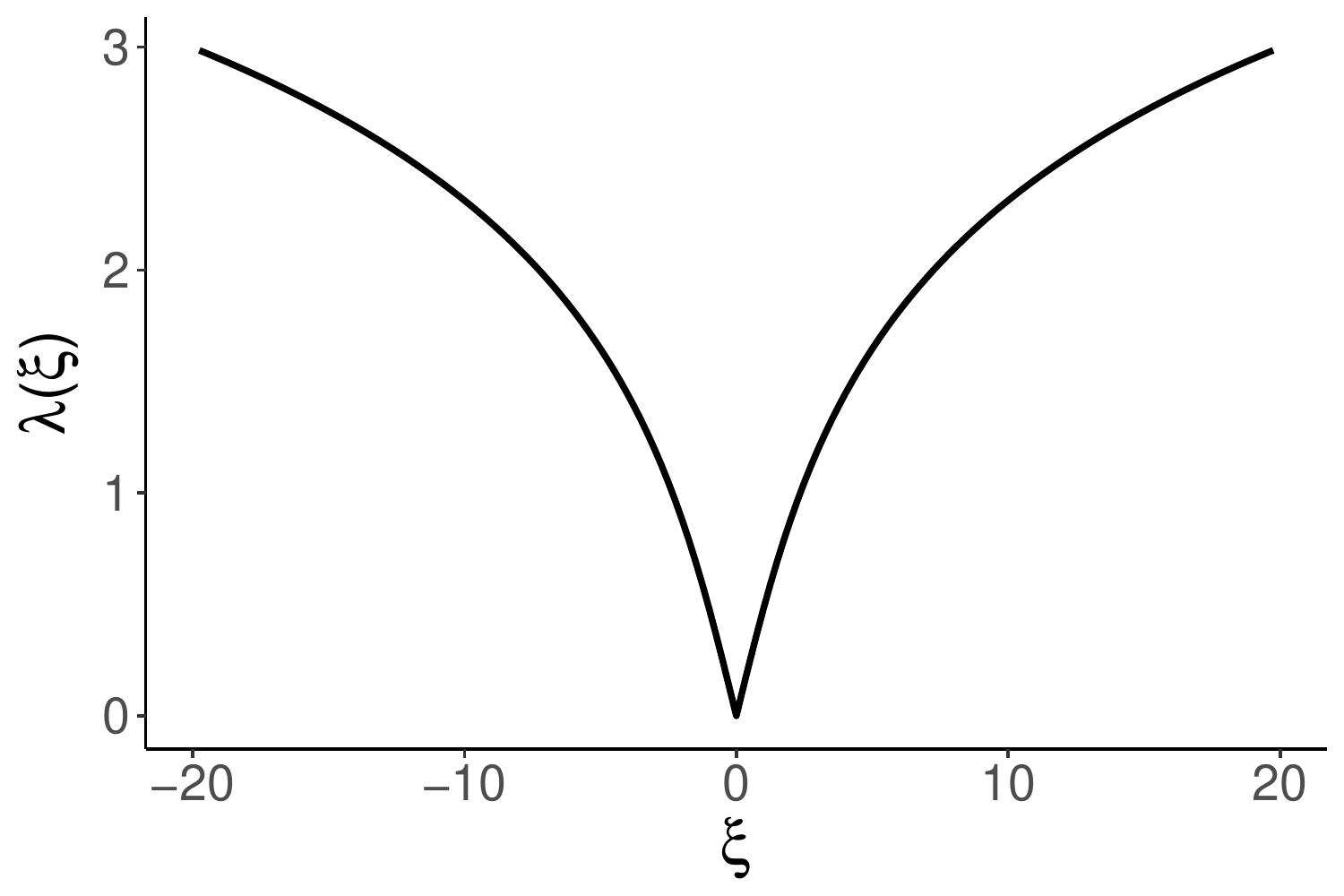}
\caption{}
\label{fig:Cauchy20Lambda}
\end{subfigure}%
\begin{subfigure}{.5\textwidth}
\centering
\includegraphics[scale=0.5]{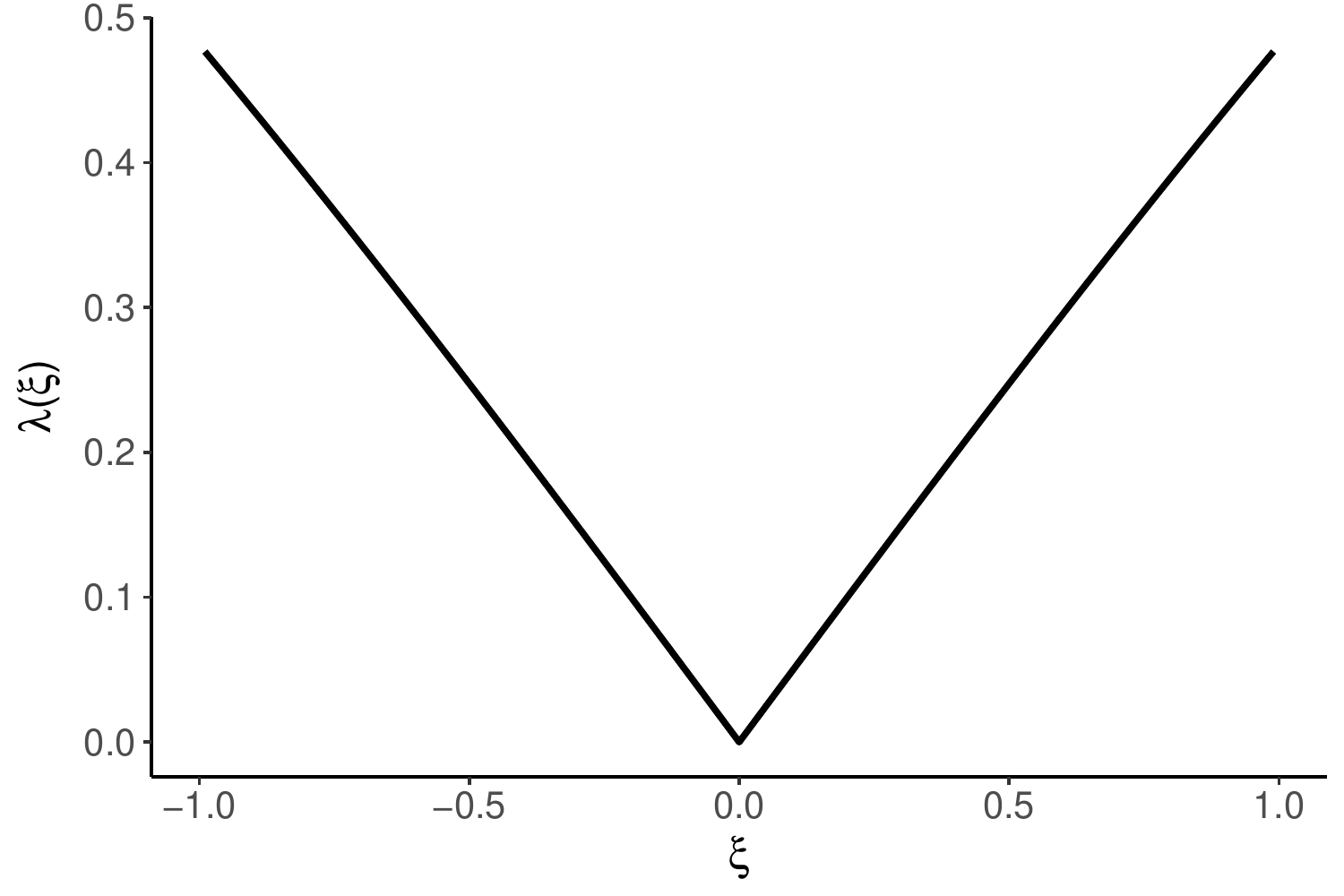}
\caption{}
\label{fig:CauchyZoomLambda}
\end{subfigure}
\caption{$\lambda(\xi)$ vs. $\xi$}
\label{fig:test}
\end{figure}

%%%%%%%%%%%%%%%%%%%%%%%%%%%%%%%%%%%%% Variance

\section{Variance Simulation}\label{variance}

It is straightforward to verify that the hypotheses of Theorem \ref{lepageclt} are satisfied for the models we studied in Sections \ref{bernp} and \ref{xicauchy}. In fact, much of the reasoning done in the beginning of Sections \ref{bernp} and \ref{xicauchy} to verify the conditions of Theorem \ref{furstenberg} can be used to verify those of Theorem \ref{lepageclt}. For example, in the $\Bernoulli \left(p\right)$ model, hypothesis $(i)$ follows from the finite support of $\mu_p$. For the Cauchy model, we can again use the Frobenius matrix norm to see that $\mathbb{E}\left[\exp\left(t\ell\left(Y_{1}\right)\right)\right]=\int\left(2+\xi^{2}x^{2}\right)^{t/2}f(x)dx$ where $f(x)$ is the density for $\mbox{Cauchy}\left(0,1\right)$. By elementary computations, this integral is seen to be finite when $t<1$ and hence hypothesis $(i)$ is also satisfied for this model. Moreover, hypothesis $(ii)$ has already be verified for both models and hypothesis $(iii)$ follows from conditions $(ii)$ and $(iii)$ of Theorem \ref{furstenberg} which have already been verified. 

%%%%%%%%%%%%%%
\begin{comment}
For the Cauchy model, condition $(i)$ of Theorem \ref{lepageclt} can be verified by noticing
that when using the Frobenius matrix norm, one arrives at $\mathbb{E}\left[\exp\left(t\ell\left(Y_{1}\right)\right)\right]=\int\left(2+\xi^{2}x^{2}\right)^{t/2}f(x)dx$
where $f(x)$ is the density for $\mbox{Cauchy}\left(0,1\right)$.
By elementary computations, this integral is seen to be finite when $t<1$ and hence condition $(i)$ is satisfied. For
$(ii)$, it is checked by way of an equivalent condition given in
{[}3, Proposition II.4.3{]}. For $(iii)$, this follows by 
noticing that if $M=\left(\begin{array}{cc}
\xi\epsilon & -1\\
1 & 0
\end{array}\right)$ then $\left\Vert M\right\Vert =\left(2+\xi^{2}\epsilon^{2}\right)^{1/2}$ gets arbitrarily large for large $\epsilon$. Hence $\left\{ \left|\det M\right|^{-1/2}M:M\in G_{\mu_{\xi}}\right\} $
is not contained in a compact subgroup of $\mbox{GL}\left(2,\mathbb{R}\right)$
\end{comment}
%%%%%%%%%%%%

Thus for $0<p<1$ and $\xi\neq 0$, we know there exists $\sigma(p),\sigma(\xi)>0$ such that for any $\mathbf{x}\in\mathbb R^2\setminus\{\mathbf{0}\}$, 
\[
\frac1{\sqrt n}\Big(\log\|S_n\mathbf{x}\|-n\lambda(p)\Big)\textrm{ and }\frac1{\sqrt n}\Big(\log\|S_n\mathbf{x}\|-n\lambda(\xi)\Big)
\]
converge weakly as $n\to\infty$ to Gaussian random variables with mean $0$ and variance $\sigma^2(p)$ and $\sigma^2(\xi)$. Here the $S_n$ are products of matrices distributed according to the probability measures $\mu_p$ and $\mu_\xi$ given in Sections \ref{bernp} and \ref{xicauchy}, respectively.

Motivated by these considerations and following the idea of Section \ref{sec:approx}, we can approximate $\sigma^2(p)$ and $\sigma^2(\xi)$ by computing the variance of 
\[
L_p:=\frac{1}{\sqrt{n}}\left(\sum_{i=1}^{n}\log\left\|U_i\right\|-n\lambda(p)\right)\text{ and }~
L_\xi:=\frac{1}{\sqrt{n}}\left(\sum_{i=1}^{n}\log\left\|U_i\right\|-n\lambda(\xi)\right)
\]
with $n$ large. Here, as in Section \ref{sec:approx}, the sequence $\{U_i\}_{i\geq 0}$ is constructed recursively by $U_0=\mathbf{x}$ and $U_i=Y_i\frac{U_{i-1}}{\|U_{i-1}\|}$ for some $\mathbf{x}\in\mathbb{R}^2$ with $\|\mathbf{x}\|=1$ and $\{Y_i\}_{i\ge1}$ an i.i.d. sequence drawn from $\mu_p$ or $\mu_\xi$ as appropriate. While we have an exact expression for $\lambda(\xi)$, we must settle for the approximation of $\lambda(p)$ obtained by simulation in Section \ref{sec:approx}.

In what follows, we summarize the simulation procedure for $\sigma^2(p)$. The procedure for $\sigma^2(\xi)$ is practically identical.
\begin{enumerate}
\item Choose an interval $[a,b]$ as the range of $p$. Divide this interval
into sub-intervals of length $k$ where $k$ divides $b-a$. Let $p$ be of the form $a+jk$ for $j=0,1,\dots,\frac{b-a}{k}$. 
\item Choose a unit vector $\mathbf{x}\in\mathbb{R}^2$.
\item Simulate $L_p$ for each $p$ from Step 1 and store the result as a data vector of length $\frac{b-a}{k}+1$.
\item Repeat Step 3 an $m$ number of times to obtain an $m\times\frac{b-a}{k}+1$
matrix, where the $j^{\text{th}}$ column contains all of the $L_p$ simulations corresponding to $p=a+(j-1)k$.
\item Estimate $\Var\left(L_{a+(j-1)k}\right)$ by the sample variance of the $j^{\text{th}}$ column of the matrix.
\end{enumerate}
Note that in all of our simulations, we set
$\mathbf{x}=\left(\frac{\sqrt{2}}{2},\frac{\sqrt{2}}{2}\right)$ in Step 2.

We first approximate the variance for the $\Bernoulli \left(p\right)$ model considered in Section \ref{bernp}. Trivially, we have that ${\sigma^2(0)=\sigma^2(1)=0}$. For $0<p<1$, we simulate $\Var\left(L_p\right)$ with $k=0.01$, $n=1000$, and $m=1\,000\,000$. We plot the resulting points in Figure \ref{fig:BernPVars} and remark that the graph exhibits distinct asymmetry with the maximum variance occurring around $p=0.56$.

\begin{figure}[H]
\centering
\includegraphics[scale=0.6]{}
\caption{$k=0.01$, $n=1000$, $m=1\,000\,000$}
\label{fig:BernPVars}
\end{figure}

For the Cauchy parameter model from Section \ref{xicauchy}, it is clear that $\sigma^2(0)=0$. For $\xi\neq 0$, we simulate $\Var\left(L_\xi\right)$ over both a large and small range of $\xi$. Figure \ref{fig:Cauchy20Vars} illustrates the results for 
$\xi\in[-20,20]$ with $k=0.25$. This is the same interval used to produce Figure \ref{fig:Cauchy20Lambda}.

%\begin{figure}[H]
%\centering
%\includegraphics[scale=0.5]{imgCauchy20VarsPTS.pdf}
%\caption{$k=0.25$, $m=5\,000\,000$, $n=1000$}
%\label{fig:Cauchy20VarsPTS}
%\end{figure}

\begin{figure}[H]
\centering
\includegraphics[scale=0.5]{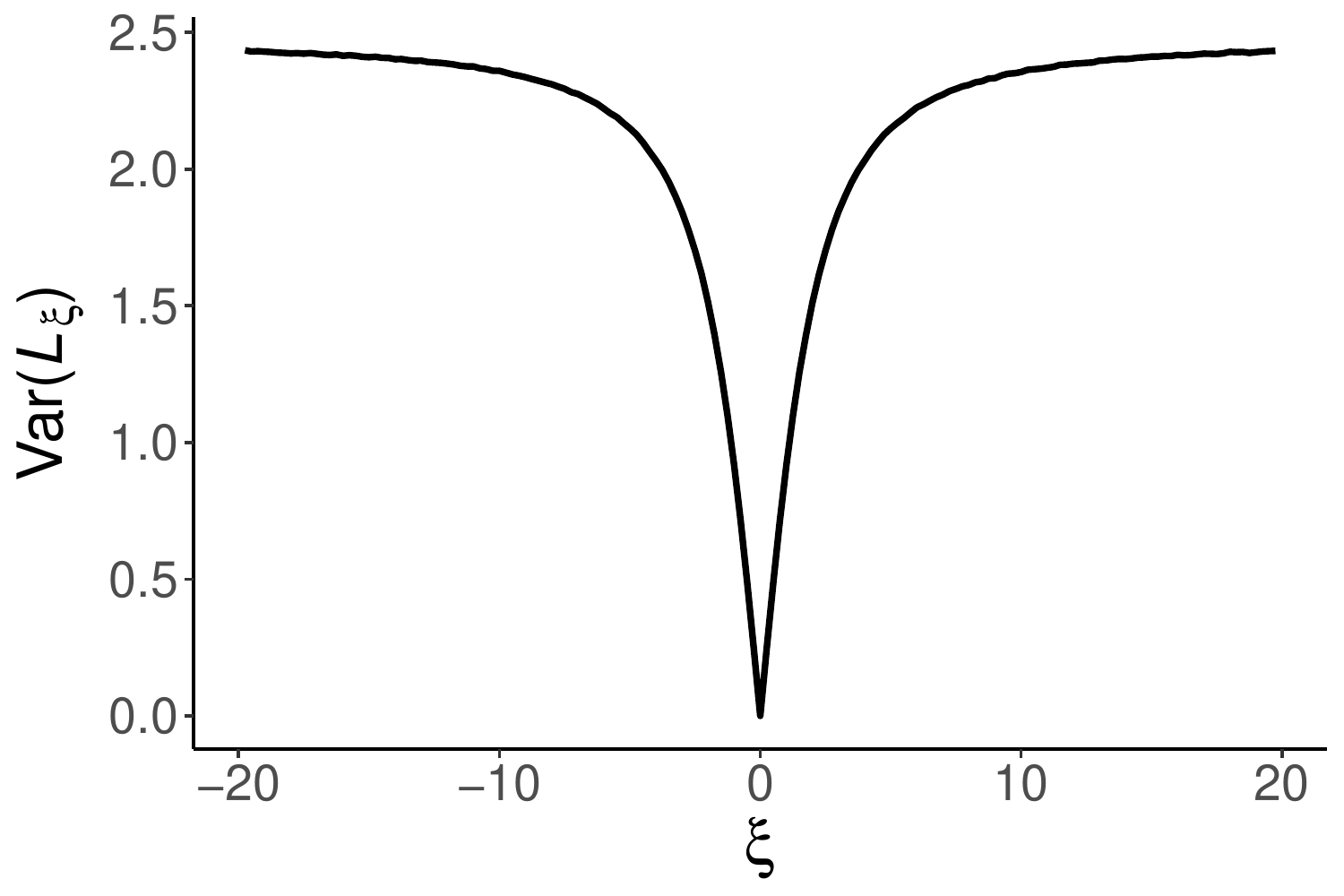}
\caption{$k=0.25$, $n=1000$, $m=5\,000\,000$}
\label{fig:Cauchy20Vars}
\end{figure}

%Clearly, $\Var\left(L_\xi\right)$ is equal to $0$ when $\xi=0$. Further, while a slight perturbation of $\xi$ on either side of zero produces a very steep change in the value of the variance, the rate of change
%gradually flattens out as $\xi$ moves away from zero. 

In Figure \ref{fig:CauchyZoomVars}, we plot $\Var\left(L_\xi\right)$ for $\xi\in[-1,1]$ with $k=0.01$ to give a much finer resolution of the graph around the origin. 

\begin{figure}[H]
\centering
\includegraphics[scale=0.5]{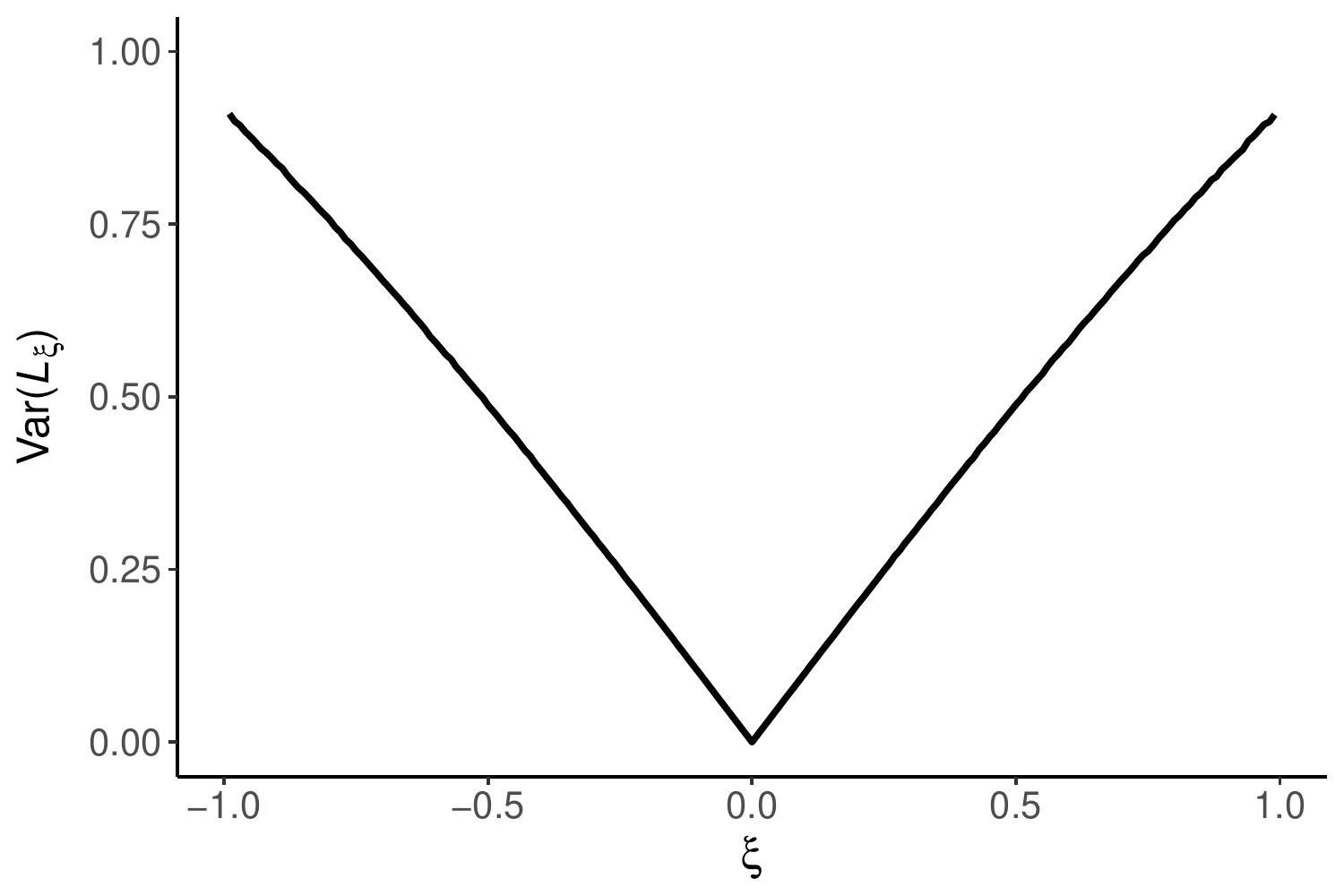}
\caption{$k=0.01$, $n=1000$, $m=1\,000\,000$}
\label{fig:CauchyZoomVars}
\end{figure}

%The two halves of the graph (i.e., between $-1$ and $0$ and between
%$0$ and $1$) appear to be linear. 

\begin{acknowledgement}
The authors are grateful for many helpful and motivating conversations with M. Gordina, L. Rogers and A. Teplyaev. We would also like to thank an anonymous referee, whose comments and suggestions greatly improved the exposition of the paper. 
\end{acknowledgement}

%\nocite{*}
%\bibliography{lyapunov.bib}
\bibliography{lyapunov}
\bibliographystyle{amsplain}

%\printbibliography

\end{document}